  \newfont\fiverm{cmr5} 
\newtheorem{thm}{Theorem}[section]
\newtheorem{lem}[thm]{Lemma}
\newtheorem{cor}[thm]{Corollary}
\newtheorem{prop}[thm]{Proposition}
\newtheorem{conj}[thm]{Conjecture}
\newtheorem{rmk}[thm]{Remark}
\newtheorem{thm-con}[thm]{Theorem-Conjecture}
\numberwithin{equation}{section}
\theoremstyle{definition}
\newcommand{\f}{\Bbb F}
\begin{document}

\title{On a conjecture on permutation polynomials over finite fields}
\author{Wun-Seng Chou*}
\address{Institute of Mathematics, Academia Sinica, Taipei, Taiwan, ROC}
\email{macws@math.sinica.edu.tw}

\author[Xiang-dong Hou]{Xiang-dong Hou}
\address{Department of Mathematics and Statistics,
University of South Florida, Tampa, FL 33620}
\email{xhou@usf.edu}
\thanks{* The author is partially supported by the Ministry of Science and Technology, Taiwan, under the grant number 106-2115-M-001-003.}

\keywords{Finite field, Hasse-Weil bound, Hermite's criterion, permutation polynomial}

\subjclass[2010]{11T06, 11T55, 14H05}

\begin{abstract}
Let $\f_q$ be the finite field with $q$ elements and let $p=\text{char}\,\f_q$. It was conjectured that for integers $e\ge 2$ and $1\le a\le pe-2$, the polynomial $X^{q-2}+X^{q^2-2}+\cdots+X^{q^a-2}$ is a permutation polynomial of $\f_{q^e}$ if and only if (i) $a=2$ and $q=2$, or (ii) $a=1$ and $\text{gcd}(q-2,q^e-1)=1$. In the present paper we confirm this conjecture.
\end{abstract}

\maketitle

%%%%%%%%%%%%%%%%%%%%%%%%%%%%%%%%%%%%%%%%%%%
%  section 1
%%%%%%%%%%%%%%%%%%%%%%%%%%%%%%%%%%%%%%%%%%%
\section{Introduction}

Let $\f_q$ be the finite field with $q$ elements and let $p=\text{char}\,\f_q$. For integer $n\ge 0$, there is a polynomial $g_{n,q}\in\f_q[X]$ satisfying 
\begin{equation}\label{1.1}
g_{n,q}(X^q-X)=\sum_{a\in\f_q}(X+a)^n.
\end{equation}
The polynomial $g_{n,q}$ was introduced in \cite{Hou-JCTA-2011} as a generalization of the even characteristic {\em reversed Dickson polynomial} \cite{Hou-Mullen-Sellers-Yucas-FFA-2009}. A polynomial $f\in\f_q[X]$ is called a {\em permutation polynomial} (PP) of $\f_q$ if it induces a permutation of $\f_q$. The polynomial $g_{n,q}$ was studied in \cite{Fernando-Hou-Lappano-FFA-2013, Hou-FFA-2012} and the class turns out to be a rich source of PPs. The ultimate goal is to determine all PPs of $\f_{q^e}$ in the class $g_{n,q}$; this is a difficult question and a complete solution does not seem to be within immediate reach. The purpose of the present paper is to prove a conjecture about $g_{n,q}$.

It is known that for $a\ge 1$, $g_{q^{a+1}-2,q}=f_{a,q}$, where
\begin{equation}\label{1.2}  
f_{a,q}=X^{q-2}+X^{q^2-2}+\cdots+X^{q^a-2}.
\end{equation}

\begin{conj}[{\cite[Conjecture 5.1]{Fernando-Hou-Lappano-FFA-2013}}]\label{C1.1}
Let $e\ge 2$ and $1\le a\le pe-2$. Then $f_{a,q}$ is a PP of $\f_{q^e}$ if and only if 
\begin{itemize}
\item[(i)] $a=2$ and $q=2$, or
\item[(ii)] $a=1$ and $\text{\rm gcd}(q-2,q^e-1)=1$.
\end{itemize}
\end{conj}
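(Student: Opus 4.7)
\emph{Plan.} The ``if'' direction is straightforward: when $a=1$, $f_{1,q}(X)=X^{q-2}$ is a monomial and permutes $\f_{q^e}$ iff $\gcd(q-2,q^e-1)=1$; when $(a,q)=(2,2)$, $f_{2,2}(X)=1+X^2=(X+1)^2$ is visibly a PP of $\f_{2^e}$. I therefore focus on the converse: assume $f_{a,q}$ permutes $\f_{q^e}$ with $e\ge 2$, $1\le a\le pe-2$, and $(a,q)\ne(2,2)$, and deduce that $a=1$ with $\gcd(q-2,q^e-1)=1$.

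\emph{Reformulation.} For any $X\in\f_{q^e}^*$, $X^{q^i-2}=X^{q^i}/X^2$, hence
\[
f_{a,q}(X)=X^{-2}L(X),\qquad L(X):=X^q+X^{q^2}+\cdots+X^{q^a},
\]
where $L$ is an $\f_q$-linear polynomial. The PP hypothesis thus forces two necessary conditions: \textbf{(a)} $L$ has trivial kernel on $\f_{q^e}$, for otherwise $f_{a,q}$ would vanish on $\{0\}\cup\ker L$; and \textbf{(b)} the equation $Y^2L(X)=X^2L(Y)$ has no solution $(X,Y)\in(\f_{q^e}^*)^2$ with $X\ne Y$. Condition (a) can be read off from $\gcd(1+T+\cdots+T^{a-1},\,T^e-1)$ in $\f_q[T]$ (after absorbing the invertible factor $T$ coming from $L=\phi\cdot(1+\phi+\cdots+\phi^{a-1})$ with $\phi$ the Frobenius), and this eliminates many triples $(a,e,q)$ at once---e.g.\ whenever $e\mid a$, so that $L$ is a multiple of the trace.

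\emph{Main step via Hasse--Weil.} For the surviving triples I pass to (b) by studying the affine plane curve
\[
C:\quad F(X,Y):=X^2L(Y)-Y^2L(X)=0.
\]
The diagonal $X=Y$ and the axes $X=0$, $Y=0$ all lie on $C$; factoring $(X-Y)XY$ out of $F$ yields a residual curve $C'$. The plan is to show that $C'$ has an absolutely irreducible component defined over $\f_q$ and to bound its genus $g$; the Hasse--Weil estimate then gives $|C'(\f_{q^e})|\ge q^e+1-2g\sqrt{q^e}$, which exceeds the contribution of the removed trivial loci once $q^e$ is large enough relative to $\deg F=q^a+2$. Any such extra rational point violates (b) and contradicts the PP assumption, forcing $a=1$.

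\emph{Remaining cases and main obstacle.} The Hasse--Weil argument degrades when $q^e$ is small relative to $q^a$, i.e.\ when $a$ is close to or exceeds $e$; for those triples, and for a finite list of small exceptional $(a,q,e)$, I fall back on Hermite's criterion, computing power sums $\sum_{x\in\f_{q^e}}f_{a,q}(x)^t$ for well-chosen $t\in\{1,\dots,q^e-2\}$ via the multinomial expansion of $L(X)^t/X^{2t}$ followed by the standard collapse $\sum_{x\in\f_{q^e}^*}x^k\in\{0,-1\}$, and exhibiting a nonzero sum. The principal obstacle I anticipate is establishing absolute irreducibility of the right factor of $F$: since $F$ is a linearized polynomial twisted by monomials, its factorization over $\overline{\f_q}$ is entangled with the Frobenius structure of $L$, and ruling out further splitting beyond $(X-Y)XY$ will probably require either a change of variable such as $Y=XZ$ exposing $C'$ as a covering of the $Z$-line, or a Newton-polygon/Stepanov-style irreducibility argument tailored to linearized polynomials.
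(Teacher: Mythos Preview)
Your two-ingredient strategy (Hasse--Weil plus Hermite) matches the paper, but you have the balance between them badly miscalibrated, and this is a genuine gap rather than a detail. The curve you write down has degree $q^a+2$, so the Hasse--Weil error term is of order $q^{2a}\cdot q^{e/2}$; this beats the main term $q^e$ only when $a\le e/4$ (and that is exactly the range the paper covers by Hasse--Weil, in Section~3). Your sentence ``the Hasse--Weil argument degrades when \dots $a$ is close to or exceeds $e$'' is off by a factor of four, and the consequence is not a ``finite list of small exceptional $(a,q,e)$'' but the entire infinite range $e/4<a\le pe-2$. In particular, for every $q$ and every $e$, there are roughly $pe$ values of $a$ to handle and Hasse--Weil disposes of only about $e/4$ of them.

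The paper's real content is therefore the Hermite part, which occupies Sections~4--8 and is far from a routine ``compute a few power sums''. One must manufacture, for each of the ranges $e/4<a<e/3$, $e/3<a<e/2$, $e/2<a<e$, $e<a<(p-1)e$, and $(p-1)e<a\le pe-2$, a specific exponent $N$ (depending on $a$, $e$, $q$ in a delicate way) for which the coefficient of $X^{q^e-1}$ in $f_{a,q}^N\bmod(X^{q^e}-X)$ is provably nonzero. This requires controlling multinomial sums over base-$q$ digit constraints, and the paper introduces auxiliary multivariate polynomials (Lemma~2.1, Propositions~4.2, 5.2, 6.2) to do this bookkeeping; even then, several small primes $p$ and small $q$ demand individual ad hoc choices of $N$. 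Your proposal gives no indication of how $N$ would be chosen in any of these ranges, and the phrase ``well-chosen $t$'' hides essentially the whole proof.
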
 

\begin{rmk}\label{R1.2}\rm
\begin{itemize}
\item[(i)]
When $e=1$, all PPs of $\f_{q^e}$ in the class $g_{n,q}$ have been determined in \cite{Fernando-Hou-Lappano-FFA-2013}. 
\item[(ii)]
Define $f_{0,q}=0$ and $f_{-1,q}=-X^{q^e-2}$. Then $f_{a,q}\equiv f_{r,q}\pmod{X^{q^e}-X}$, where $-1\le r\le pe-2$ is the remainder of $a$ modulo $pe$; hence we only need to consider $1\le a\le pe-2$.
\end{itemize}
\end{rmk}

\begin{rmk}\label{R1.3}\rm
Note that $f_{a,q} = X^{-2}L(X)^q$, where $L(X) = X + X^q +\cdots+ X^{q^{a-1}}$ is a $q$-linearized polynomial over $\f_q$. If $f_{a,q}$ is a PP of $\f_{q^e}$, then $0$ is the only root of $L(X)$ in $\f_{q^e}$, which happens if and only if $\text{gcd}(1 + X +\cdots+ X^{a-1}, X^e - 1) = 1$. The gcd condition is satisfied if and only if $a\not\equiv 0\pmod p$ and $\text{gcd}(a, e) = 1$, i.e., $\text{gcd}(a, pe) = 1$. Hence a necessary condition for $f_{a,q}$ to be a PP of $\f_{q^e}$ is that $\text{gcd}(a, pe) = 1$. 
\end{rmk}

When $q$ is even, the conjecture follows from Payne's theorem \cite{Hou-PAMS-2004, Payne-LRSFMN-1971}. In fact, when $q$ is even and $q\ge 4$,
\[
f_{a,q}=\Bigl(\frac{X^{q/2}+X^{q^2/2}+\cdots+X^{q^a/2}}X\Bigr)^2,
\]
where $X^{-1}(X^{q/2}+X^{q^2/2}+\cdots+X^{q^a/2})$ is a PP of $\f_{q^e}$ if and only if $a=1$ and $\text{gcd}(q/2-1,q^e-1)=1$. When $q=2$,
\[
f_{a,q}=\Bigl(1+\frac{X^{2}+X^{2^2}+\cdots+X^{2^{a-1}}}X\Bigr)^2,
\]  
where $X^{-1}(X^{2}+X^{2^2}+\cdots+X^{2^{a-1}})$ is a PP of $\f_{2^e}$ if and only if $a=2$. When $a=1$ and regardless of $q$, the conjecture is obviously true. Therefore, to confirm Conjecture~\ref{C1.1}, it remains to prove that when $q$ is odd, $e\ge 2$ and $2\le a\le pe-2$, $f_{a,q}$ is not a PP of $\f_{q^e}$. This claim is proved by combining two different approaches:

\begin{itemize}
\item[(1)] When $2\le a\le e/4$, the Hasse-Weil bound applied to $(f_{a,q}(X)-f_{a,q}(Y))/(X-Y)$ implies that $f_{a,q}$ is not a PP of $\f_{q^e}$. The proof of this claim can be found in a recent paper \cite{Hou-pre}; a brief recap of that proof is included in the present paper for the sake of completeness.

\item[(2)] For $e/4<a\le pe-2$, we use Hermite's criterion. The key is to choose a suitable integer $N$ ($0<N<q^e-1$) such that the coefficient of $X^{q^e-1}$ in the reduction of $f_{a,q}^N$ modulo $X^{q^e}-X$ is nonzero. The arguments in this part are quite involved. The choices of $N$ are the results of numerous experimentations; the most difficult cases are those with small $p$. To make the proof more readable, we introduced certain auxiliary multivariate polynomials to keep track of the involved computations that would otherwise require cumbersome notations. 
\end{itemize}

The paper is organized as follows: Section 2 contains some preliminaries which include notations, conventions and a technical lemma. The proof of Conjecture~\ref{C1.1} is given in Sections 3 -- 8, each dealing with a particular range of $a$:

Section 3: $2\le a\le e/4$;

Section 4: $e/4< a< e/3$;

Section 5: $e/3< a< e/2$;

Section 6: $e/2< a< e$;

Section 7: $e<a<(p-1)e$;

Section 8: $(p-1)e< a\le pe-2$.

\noindent By Remark~\ref{R1.3}, these ranges cover all possible values of $a$.

The two approaches mentioned above, the Hasse-Weil bound \cite{Hou-LFF-2018, Schmidt-1976, Stichtenoth-1993, Weil-1948} and Hermite's criterion \cite{Lidl-Niederreiter-FF-1997}, are well-known methods for studying PPs. The Hasse-Weil bound is particularly effective when dealing with PPs of finite fields with large $q$ \cite{Bartoli-FFA-2018, Cohen-AA-1970, Hou-pre, Leducq-DCC-2015, Williams-CMB-1968}. As for Hermite's criterion, the situation is somewhat different. There appears to be a tacit understanding that the criterion is easy to state but difficult to use; nontrivial applications of Hermite's criterion are rare. However, the work of the present paper suggests that Hermite's criterion may not have been utilized to its full strength previously; with due effort, it can offer solutions that other methods cannot.

%%%%%%%%%%%%%%%%%%%%%%%%%%%%%%%%%%%%%%%%%%%
%  section 2
%%%%%%%%%%%%%%%%%%%%%%%%%%%%%%%%%%%%%%%%%%%
\section{Preliminaries}

We maintain the following notations and conventions throughout the paper. 
For an integer $s\ge 0$, let $|s|_q$ denote its base-$q$ weight, i.e., for $s=\sum_{j\ge 0}s_jq^j$, where $s_j\in\{0,\dots,q-1\}$, $|s|_q=\sum_{j\ge 0}s_j$. A congruence equation $x\equiv y\pmod p$ is also written as $x\equiv_py$. 
For integer $N\ge 0$, let $C(N)$ be the coefficient of $X^{q^e-1}$ in the reduction of $f_{a,q}^N$ modulo $X^{q^e}-X$. By Hermite's criterion \cite[Theorem~7.4]{Lidl-Niederreiter-FF-1997}, if $C(N)\ne 0$ for some $0<N<q^e-1$, then $f_{a,q}$ is not a PP of $\f_{q^e}$. For $E\in\Bbb Z$, let $E^*$ be the integer such that $1\le E^*\le q^e-1$ and $E\equiv E^*\pmod{q^e-1}$.
For $F\in\f_q[X_0,\dots,X_n]$, the coefficient of $X_0^{\delta_0}\cdots X_n^{\delta_n}$ in $F$ is denoted by $[F:X_0^{\delta_0}\cdots X_n^{\delta_n}]$. 
The notation
\[
\sum_{\substack{\alpha,\beta,\cdots\cr (*),(\dagger),\cdots}}
\]
denotes a sum over  integers $\alpha,\beta,\cdots$ in a range specified by the context subject to conditions $(*),(\dagger),\cdots$. The multinomial coefficient $\binom n{n_1,\dots,n_k}$, where $n_i\ge 0$ and $n_1+\cdots+n_k=n$, is defined as 
\[
\binom n{n_1,\dots,n_k}=\frac{n!}{n_1!\cdots n_k!}.
\]
If $n_1+\cdots+n_k\ne n$, we define $\binom n{n_1,\dots,n_k}=0$.
Recall that if $n=\sum_{j\ge 0}n^{(j)}p^j$ and $n_i=\sum_{j\ge 0}n_i^{(j)}p^j$ are the base-$p$ expansions of $n$ and $n_i$, respectively, then \cite{Dickson-QJPAM-1902}
\[
\binom n{n_1,\dots,n_k}\equiv_p\prod_{j\ge 0}\binom {n^{(j)}}{n_1^{(j)},\dots,n_k^{(j)}}.
\]

\begin{lem}\label{L2.1}
Let $s_j$ and $\delta_j$ ($j\ge 0$) be nonnegative integers such that $s_j=\delta_j=0$ for all but finitely many $j$. Then
\begin{equation}\label{2.1}
\sum_{a_{ij}}\prod_{j\ge 0}\binom{s_j}{a_{0j},\dots,a_{mj}}=\Bigl[\prod_{j\ge 0}(X_j+X_{j+1}+\cdots+X_{j+m})^{s_j}:X_0^{\delta_0}X_1^{\delta_1}\cdots\Bigr],
\end{equation}
where the sum runs over all integers $a_{ij}\ge 0$ ($0\le i\le m$, $j\ge 0$) subject to the conditions 
\begin{align}\label{cond1}
\sum_{i=0}^m a_{ij}\,&=s_j,\quad j\ge 0,\\ \label{cond2}
\sum_{i+j=k}a_{ij}\,&=\delta_k,\quad k\ge 0.
\end{align}
\end{lem}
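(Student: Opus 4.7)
The plan is to prove this identity by direct expansion via the multinomial theorem and then by collecting monomials. Since both sides of \eqref{2.1} are determined purely formally from the variables $X_j$, this is really a bookkeeping exercise rather than a deep combinatorial argument.

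First I would apply the multinomial theorem to each factor on the right-hand side, writing
\[
(X_j+X_{j+1}+\cdots+X_{j+m})^{s_j}=\sum_{a_{0j}+\cdots+a_{mj}=s_j}\binom{s_j}{a_{0j},\dots,a_{mj}}\prod_{i=0}^m X_{i+j}^{a_{ij}},
\]
where the $a_{ij}$ are nonnegative integers. Taking the product over $j\ge 0$ (only finitely many factors are nontrivial since $s_j=0$ for large $j$) gives
\[
\prod_{j\ge 0}(X_j+\cdots+X_{j+m})^{s_j}=\sum_{(a_{ij})\text{ satisfies }\eqref{cond1}}\Bigl(\prod_{j\ge 0}\binom{s_j}{a_{0j},\dots,a_{mj}}\Bigr)\prod_{j\ge 0}\prod_{i=0}^m X_{i+j}^{a_{ij}}.
\]

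Next I would rewrite the monomial $\prod_{j,i}X_{i+j}^{a_{ij}}$ by grouping the factors according to the subscript $k=i+j$, obtaining $\prod_{k\ge 0}X_k^{\sum_{i+j=k}a_{ij}}$. Thus a given array $(a_{ij})$ satisfying \eqref{cond1} contributes to the monomial $X_0^{\delta_0}X_1^{\delta_1}\cdots$ precisely when $\sum_{i+j=k}a_{ij}=\delta_k$ for all $k\ge 0$, that is, when \eqref{cond2} also holds. Collecting the coefficients of $X_0^{\delta_0}X_1^{\delta_1}\cdots$ on both sides yields \eqref{2.1}.

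I do not anticipate any serious obstacle: the identity is simply the assertion that expanding the product by the multinomial theorem and reading off a chosen coefficient reproduces the sum on the left. The only care needed is to index the arrays $(a_{ij})$ correctly so that the two linear constraints \eqref{cond1} and \eqref{cond2} correspond, respectively, to the multinomial expansions and to the selection of the monomial $X_0^{\delta_0}X_1^{\delta_1}\cdots$.
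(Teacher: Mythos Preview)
Your proposal is correct and follows essentially the same approach as the paper: expand each factor by the multinomial theorem, regroup the monomial as $\prod_{k\ge 0}X_k^{\sum_{i+j=k}a_{ij}}$, and extract the coefficient of $\prod_k X_k^{\delta_k}$ to obtain the stated sum.
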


%%%%%%%%%%%%%%%%%%%%%%%%%%%%%%%%%%%%%%%%%%%%%%
%   begin pic
%%%%%%%%%%%%%%%%%%%%%%%%%%%%%%%%%%%%%%%%%%%%%%
\[
\beginpicture
\setcoordinatesystem units <6mm,6mm> point at 0 0

\arrow <5pt> [.2,.67] from 8 -4.3 to 8 -5 
\arrow <5pt> [.2,.67] from 3.7 -4.3 to 3 -5 

\setlinear
\plot 8 -0.3  8 -3.7 /
\plot 7.7 -0.3  4.3 -3.7 /
\plot 10 0.5  -0.8 0.5  -0.8 -4.5  10 -4.5  /

\put {$a_{00}$} at 0 0
\put {$a_{m0}$} at 0 -4
\put {$a_{m,j-m}$} at 4 -4
\put {$a_{mj}$} at 8 -4
\put {$a_{0j}$} at 8 0
\put {$s_j$} at 8 -5.5
\put {$\delta_j$} at 2.7 -5.5
\endpicture
\]
%%%%%%%%%%%%%%%%%%%%%%%%%%%%%%%%%%%%%%%%%%%%%%
%   end pic
%%%%%%%%%%%%%%%%%%%%%%%%%%%%%%%%%%%%%%%%%%%%%%

\begin{proof}
We have
\begin{align*}
\prod_{j\ge 0}(X_j+X_{j+1}+\cdots+X_{j+m})^{s_j}
=\,&\prod_{j\ge 0}\Bigl(\sum_{a_{0j}+\cdots+a_{mj}=s_j}\binom{s_j}{a_{0j},\dots,a_{mj}}\prod_{i=0}^m X_{i+j}^{a_{ij}}\Bigr)\cr
=\,&\sum_{\substack{a_{ij}\cr\eqref{cond1}}}\Bigl[\prod_{j\ge 0}\binom{s_j}{a_{0j},\dots,a_{mj}}\Bigr]\Bigl[\prod_{k\ge 0} X_k^{\sum_{i+j=k}a_{ij}}\Bigr].
\end{align*}
Hence
\[
\Bigl[\prod_{j\ge 0}(X_j+X_{j+1}+\cdots+X_{j+m})^{s_j}:\prod_{k\ge 0}X_k^{\delta_k}\Bigr]=\sum_{\substack{a_{ij}\cr\eqref{cond1},\eqref{cond2}}}\;\prod_{j\ge 0}\binom{s_j}{a_{0j},\dots,a_{mj}}.
\]
\end{proof}

From now on, $q$ is a power of an odd prime $p$, $e\ge 2$, $2\le a\le pe-2$ and $\text{gcd}(a,pe)=1$. The goal is to show that $f_{a,q}$ is not a PP of $\f_{q^e}$.

%%%%%%%%%%%%%%%%%%%%%%%%%%%%%%%%%%%%%%%%%%%
%  section 3
%%%%%%%%%%%%%%%%%%%%%%%%%%%%%%%%%%%%%%%%%%%
\section{The case $2\le a\le e/4$}

\begin{thm}[\cite{Hou-pre}]\label{T3.1}
If $2\le a\le e/4$, $f_{a,q}$ is not a PP of $\f_{q^e}$.
\end{thm}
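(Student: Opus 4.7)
The strategy is to apply the Hasse-Weil bound to the affine plane curve
\[
\mathcal{C}:\quad H(X,Y)=\frac{f_{a,q}(X)-f_{a,q}(Y)}{X-Y}=0.
\]
Because each summand $X^{q^i-2}-Y^{q^i-2}$ is divisible by $X-Y$, the quotient $H$ is a genuine polynomial in $\f_q[X,Y]$ of total degree $q^a-3$. If $f_{a,q}$ is a PP of $\f_{q^e}$, then every $\f_{q^e}$-rational point of $\mathcal{C}$ must satisfy $X=Y$; the number of such ``trivial'' points equals the number of zeros in $\f_{q^e}$ of $H(X,X)=f'_{a,q}(X)$, hence is at most $q^a-3$. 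It therefore suffices to exhibit even a single $\f_{q^e}$-rational point of $\mathcal{C}$ off the diagonal.

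The plan is to extract an absolutely irreducible factor $H^\ast\in\overline{\f_q}[X,Y]$ of $H$, of degree $d$ and geometric genus $g\le (d-1)(d-2)/2$, and to apply the Hasse-Weil bound in the form
\[
\bigl|\#\mathcal{C}^\ast(\f_{q^e})-(q^e+1)\bigr|\le 2g\,q^{e/2}+O(d),
\]
where $\mathcal{C}^\ast$ denotes the smooth projective model of $H^\ast=0$. If $d=\Theta(q^a)$, so that $g=O(q^{2a})$, the error term is dominated by $q^e$ precisely in the range $4a\le e$ (with $q$ at least moderate). Subtracting the $O(q^a)$ diagonal points, together with the singular and infinite points of $\mathcal{C}^\ast$, still leaves an $\f_{q^e}$-point $(x,y)$ with $x\ne y$ and $f_{a,q}(x)=f_{a,q}(y)$, contradicting the PP hypothesis.

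The principal obstacle is the verification that $H$ admits an absolutely irreducible factor of degree on the order of $q^a$. The homogeneous leading form of $H$,
\[
\Phi(X,Y)=\frac{X^{q^a-2}-Y^{q^a-2}}{X-Y}=\prod_{\substack{\zeta^{q^a-2}=1\\ \zeta\ne 1}}(X-\zeta Y),
\]
splits completely into linear factors over $\overline{\f_q}$ (note $\gcd(q^a-2,p)=1$), so irreducibility cannot be read off from the top degree alone. One must combine the Newton polytope of $H$ with a valuation-theoretic or smooth-point argument to detect an irreducible component of the desired size, and then verify that this component contributes a geometric genus of the claimed order. Once absolute irreducibility is established, the careful bookkeeping of singular, infinite, and diagonal points falls strictly below $q^e$ exactly when $a\le e/4$; any borderline cases are handled by a marginally sharper genus estimate. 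The complete execution of this program is carried out in \cite{Hou-pre}.
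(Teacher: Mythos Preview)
Your proposal follows essentially the same route as the paper: form the difference quotient $F(X,Y)=(f_{a,q}(X)-f_{a,q}(Y))/(X-Y)$, establish absolute irreducibility (deferred to \cite{Hou-pre}), apply the Hasse--Weil bound, and compare against the diagonal contribution. Two small points where the paper is sharper: it asserts that $F$ itself is absolutely irreducible (not merely that it has a large absolutely irreducible factor over $\overline{\f_q}$), which avoids any question of whether the factor is defined over $\f_{q^e}$; and it observes that under the PP hypothesis $F(X,X)=-2X^{-1}f_{a,q}(X)$ has at most \emph{one} zero in $\f_{q^e}$, whereas you only use the degree bound $q^a-3$---harmless here, but worth noting.
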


\begin{proof} We only give a sketch; a detailed proof is given in \cite{Hou-pre}.
Assume to the contrary that $f_{a,q}$ is a PP of $\f_{q^e}$.
Let 
\begin{equation}\label{3.1aa}
F(X,Y)=\frac{f_{a,q}(X)-f_{a,q}(Y)}{X-Y}=\sum_{i=1}^{a}\frac{X^{q^i-2}-Y^{q^i-2}}{X-Y}.
\end{equation}
It can be shown that $F(X,Y)$ is absolutely irreducible, i.e., irreducible over the algebraic closure $\overline\f_q$ of $\f_q$. Let $d=\deg F(X,Y)=q^{a}-3$ and $V_{\f_{q^e}^2}(F)=\{(x,y)\in\f_{q^e}^2:F(x,y)=0\}$. By the Hasse-Weil bound \cite[Theorem~5.28]{Hou-LFF-2018},
\begin{equation}\label{4.1a}
|V_{\f_{q^e}^2}(F)|\ge q^e-(d-1)(d-2)q^{e/2}-\frac 12d(d-1)^2-d-2.
\end{equation}
Let $\lambda$ denote the larger root of 
\[
X^2-(d-1)(d-2)X-\frac 12 d(d-1)^2-d-3\in\Bbb R[X].
\]
We have
\begin{align*}
\lambda\,&=\frac12\Bigl[(d-1)(d-2)+\sqrt{(d-1)^2(d-2)^2+2d(d-1)^2+4d+12}\Bigr]\cr
&=\frac12\Bigl[(d-1)(d-2)+\sqrt{d^4-4d^3+9d^2-6d+16}\,\Bigr]\cr
&<\frac12\bigl[(d-1)(d-2)+d^2\bigr]<d^2<q^{2a}\le q^{e/2}.
\end{align*}
Therefore \eqref{4.1a} gives that $|V_{\f_{q^e}^2}(F(X,Y))|>1$. By \eqref{3.1aa}, $F(X,X)=-2(X^{q-3}+X^{q^2-3}+\cdots+X^{q^{a}-3})=-2X^{-1}f_{a,q}(X)$, which has at most one zero in $\f_{q^e}$ since $f_{a,q}$ is a PP of $\f_{q^e}$. Hence $F(X,Y)$ has a zero $(x,y)\in\f_{q^e}^2$ with $x\ne y$. Consequently, $f_{a,q}$ is not one-to-one on $\f_{q^e}$, which is a contradiction.
\end{proof}

%%%%%%%%%%%%%%%%%%%%%%%%%%%%%%%%%%%%%%%%%%%
%  section 4
%%%%%%%%%%%%%%%%%%%%%%%%%%%%%%%%%%%%%%%%%%%
\section{The case $e/4<a< e/3$}

In this section we will use the following notations. Let $0<r<q$ and $s>0$ be integers such that $s\equiv -7r\pmod{q-1}$ and $s\equiv r/2\pmod q$, i.e., 
$s\equiv r((q^2+1)/2-8q)\pmod{q(q-1)}$; see Remark~\ref{R4.2} for the reason for this requirement. Let 
\begin{equation}\label{4.2}
N=r(q^{e-a}+2q^{e-2a}+4q^{e-3a})+s.
\end{equation}
Write
\begin{gather}\label{4.14}
q^{e-3a}4r=\sum_{j\ge 0}a_jq^j,\\
\label{4.15}
s=\sum_{j\ge 0}s_jq^j,
\end{gather}
where $a_j,s_j\in\{0,\dots,q-1\}$. Let
\begin{gather}\label{4.15.1}
S_1=4rq^{e-2a-1}+8rq^{e-3a-1}+q^{-1}(2s-r),\\ \label{4.15.2} 
S_2=8rq^{e-3a-1}+q^{-1}(2s-r),
\end{gather}
and
\begin{equation}\label{4.16}
\mathcal E_1=\Bigl\{(\epsilon_0,\epsilon_1,\dots):\epsilon_j\ge 0,\ \sum_{j\ge 0}\epsilon_j=|4r|_q+|s|_q,\ \sum_{j\ge 0}\epsilon_jq^j=S_1 \Bigr\},
\end{equation}
\begin{equation}\label{4.17}
\mathcal E_2=\Bigl\{(\epsilon_0,\epsilon_1,\dots):\epsilon_j\ge 0,\ \sum_{j\ge 0}\epsilon_j=|s|_q,\ \sum_{j\ge 0}\epsilon_jq^j=S_2 \Bigr\}.
\end{equation}
The elements of $\mathcal E_1$ can be determined as follows: Write
\begin{equation}\label{4.18}
S_1=\sum_{j\ge 0}e_jq^j,
\end{equation}
where $e_j\in\{0,\dots,q-1\}$. If $\sum_{j\ge 0}e_j>|4r|_q+|s|_q$, $\mathcal E_1=\emptyset$. If $\sum_{j\ge 0}e_j=|4r|_q+|s|_q$, $\mathcal E_1=\{(e_0,e_1,\dots)\}$. If $\sum_{j\ge 0}e_j<|4r|_q+|s|_q$, elements $(\epsilon_0,\epsilon_1,\dots)\in\mathcal E_1$ are obtained from $(e_0,e_1,\dots)$ through borrows in base-$q$. The elements of $\mathcal E_2$ are determined similarly.

\begin{lem}\label{L4.1}
In the above, if $s\le q^{e-2a-1}(q-1)$, then $0<N<q^e-1$ and 
\begin{equation}\label{4.4}
C(N)=\sum_{\alpha_{3i},\alpha_{4i}}\binom{4r}{\alpha_{31},\dots,\alpha_{3a}}\binom{s}{\alpha_{41},\dots,\alpha_{4a}},
\end{equation}
where the sum is over all nonnegative integers $\alpha_{3i},\alpha_{4i}$, $1\le i\le a$, subject to the conditions
\begin{gather}\label{4.5}
\sum_{i=1}^{a}\alpha_{3i}=4r,\\
\label{4.6}
\sum_{i=1}^{a}\alpha_{4i}=s,\\
\label{4.6.1}
\sum_{i=1}^{a}\alpha_{3i}q^{e-3a+i-1}+\sum_{i=1}^{a}\alpha_{4i}q^{i-1}=S_1.
\end{gather}
If $s\le q^{e-3a-1}(q-1)$, then 
\begin{equation}\label{4.6.2}
C(N)=\sum_{\alpha_{4i}}\binom{s}{\alpha_{41},\dots,\alpha_{4a}},
\end{equation}
where the sum is over all integers $\alpha_{4i}$, $1\le i\le a$, subject to \eqref{4.6} and
\begin{equation}\label{4.6.3}
\sum_{i=1}^{a}\alpha_{4i}q^{i-1}=S_2.
\end{equation}
\end{lem}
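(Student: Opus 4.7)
The plan is to compute $C(N)$ from the multinomial expansion of $f_{a,q}^N$ after the Frobenius factorization
\[
f_{a,q}^N=f_{a,q}(X^{q^{e-a}})^r\cdot f_{a,q}(X^{q^{e-2a}})^{2r}\cdot f_{a,q}(X^{q^{e-3a}})^{4r}\cdot f_{a,q}(X)^s,
\]
which is legitimate since $f_{a,q}\in\f_p[X]$. Expanding each factor by the multinomial theorem presents $[f_{a,q}^N:X^M]$ as a sum over nonnegative $(\alpha_{ki})$, $k=1,\dots,4$, $i=1,\dots,a$, with row sums $r,2r,4r,s$, of the product of four multinomial coefficients, subject to the exponent identity $M=P_1+P_2+P_3+P_4-2N$, where $P_k=\sum_i q^{e-ka+i}\alpha_{ki}$ for $k\le 3$ and $P_4=\sum_i q^i\alpha_{4i}$. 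Since $C(N)=\sum_{m\ge 1}[f_{a,q}^N:X^{m(q^e-1)}]$, the task reduces to identifying the admissible $m$; the bound $0<N<q^e-1$ follows from $r<q$ and $s<q^{e-2a}$.

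I would pin down $m$ as follows. Modulo $q$, each $M_k=P_k-(\text{corresponding piece of }2N)$ is divisible by $q$ for $k=1,2,3$, while $M_4=P_4-2s\equiv-2s\equiv-r\pmod q$ by the assumption $s\equiv r/2\pmod q$. Since $m(q^e-1)\equiv-m\pmod q$, this forces $m\equiv r\pmod q$. On the other hand, the crude bound $M\le rq^e+s(q^a-2)<rq^e+q^{e-a}<(r+1)(q^e-1)$, using $s\le q^{e-2a-1}(q-1)$ and $r<q$, gives $m\le r$. Hence $m=r$ and $M=r(q^e-1)$.

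With $M$ determined, dividing the exponent identity by $q$ (using $q\mid 2s-r$) yields
\[
\sum_i\alpha_{1i}q^{e-a+i-1}+\sum_i\alpha_{2i}q^{e-2a+i-1}+\sum_i\alpha_{3i}q^{e-3a+i-1}+\sum_i\alpha_{4i}q^{i-1}=rq^{e-1}+2rq^{e-a-1}+S_1.
\]
Here the $\alpha_{1i}$ part is a multiple of $q^{e-a}$, the $\alpha_{2i}$ part is a multiple of $q^{e-2a}$, and $rq^{e-1}+2rq^{e-a-1}$ is a multiple of $q^{e-2a}$ lying above $S_1$. The hypothesis $s\le q^{e-2a-1}(q-1)$ controls both $S_1$ and the $\alpha_{3i},\alpha_{4i}$ contributions, forcing the equation to split into $\sum_i\alpha_{1i}q^{e-a+i-1}=rq^{e-1}$, $\sum_i\alpha_{2i}q^{e-2a+i-1}=2rq^{e-a-1}$, and the claimed constraint \eqref{4.6.1}. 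Using the elementary fact that $\sum_i\alpha_i q^{i-1}$ with $\sum_i\alpha_i=C$ prescribed is maximized only at $\alpha_a=C$ with value $Cq^{a-1}$, the first two equations (after dividing by $q^{e-a}$ and $q^{e-2a}$) force $\alpha_{1a}=r$ and $\alpha_{2a}=2r$ with other entries zero. The corresponding multinomials equal $1$, yielding \eqref{4.4}. Under the stronger hypothesis $s\le q^{e-3a-1}(q-1)$, a similar analysis further separates $\alpha_{3i}$ from $\alpha_{4i}$ and forces $\alpha_{3a}=4r$, yielding \eqref{4.6.2}.

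The main obstacle I anticipate is the position-range book-keeping: one has to verify carefully that the various parts of the equation partition correctly across the boundaries without carries interfering, which uses both the magnitude bounds on $s$ and the congruence conditions on $s$ modulo $q$ and $q-1$ in an essential way.
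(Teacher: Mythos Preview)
Your proposal is correct and follows essentially the paper's approach. The paper organizes the argument slightly differently: it reduces $\alpha_{1a}q^e$ to $\alpha_{1a}$ up front (so its exponent $E$ equals your $M-\alpha_{1a}(q^e-1)$), establishes $E=0$ via two-sided bounds, reads off $\alpha_{1a}=r$ directly from $E\equiv 0\pmod q$ (since $0\le\alpha_{1a}\le r<q$), and then shows $\alpha_{2a}=2r$ by the contradiction that any $\alpha_{2i}>0$ with $i<a$ would force $E\le -q^{e-a}+q^{e-a-1}+s(q^a-2)+\text{(lower order)}<0$ under the hypothesis $s\le(q-1)q^{e-2a-1}$---your position-range splitting, once the book-keeping is carried out, amounts to the same inequality.
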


\begin{proof}
Assume that $s\le q^{e-2a-1}(q-1)$. Obviously, $0<N<q^e-1$. We have
\begin{align}\label{4.7}
f_{a,q}^N&=f_{a,q}^{r(q^{e-a}+2q^{e-2a}+4q^{e-3a})+s}\\ 
&=X^{-2r(q^{e-a}+2q^{e-2a}+4q^{e-3a})-2s}\cr
&\phantom{=}\cdot\Bigl(\sum_{i=1}^{a}X^{q^{e-a+i}}\Bigr)^r\Bigl(\sum_{i=1}^{a}X^{q^{e-2a+i}}\Bigr)^{2r}\Bigl(\sum_{i=1}^{a}X^{q^{e-3a+i}}\Bigr)^{4r}\Bigl(\sum_{i=1}^{a}X^{q^i}\Bigr)^s\cr
&\equiv\sum_{\alpha_{ki}}\binom{r}{\alpha_{11},\dots,\alpha_{1a}}\binom{2r}{\alpha_{21},\dots,\alpha_{2a}}\binom{4r}{\alpha_{31},\dots,\alpha_{3a}}\binom{s}{\alpha_{41},\dots,\alpha_{4a}} X^{E^*}\cr
&\kern 0.8em \pmod{X^{q^e}-X},\nonumber
\end{align}
where
\begin{align}\label{4.8}
E=\,&-2r(q^{e-a}+2q^{e-2a}+4q^{e-3a})-2s+\alpha_{1a}\\
&+\sum_{i=1}^{a-1}\alpha_{1i}q^{e-a+i}+\sum_{i=1}^{a}\alpha_{2i}q^{e-2a+i}+\sum_{i=1}^{a}\alpha_{3i}q^{e-3a+i}+\sum_{i=1}^{a}\alpha_{4i}q^i,\nonumber
\end{align}
and $\alpha_{ki}\ge 0$ are integers such that
\begin{gather}\label{2.4}
\sum_{i=1}^{a}\alpha_{ki}=2^{k-1}r,\quad 1\le k\le 3,\\
\label{2.5}
\sum_{i=1}^{a}\alpha_{4i}=s.
\end{gather}
Assume that $E^*=q^e-1$. We claim that $E=0$.
In fact,
\begin{align*}
E\ge\,&-2r(q^{e-a}+2q^{e-2a}+4q^{e-3a})-2s+r+2rq^{e-2a+1}+4rq^{e-3a+1}+sq\cr
=\,&-2rq^{e-a}+2r(q-2)q^{e-2a}+4r(q-2)q^{e-3a}+s(q-2)+r\cr
>\,&-(q^e-1) \kern 18.5em \text{(since $r\le q^{a}/2$)},
\end{align*}
and
\begin{align*}
E\le\,&-2r(q^{e-a}+2q^{e-2a}+4q^{e-3a})-2s+rq^{e-1}+2rq^{e-a}+4rq^{e-2a}+sq^{a}\cr
=\,&rq^{e-1}-8rq^{e-3a}+s(q^{a}-2)<q^e-1 \kern 1.7em \text{(since $r<q$ and $s\le q^{e-a-1}$)}.
\end{align*}
We further claim that $\alpha_{ka}=2^{k-1}r$ for $k=1,2$, i.e., $\alpha_{ki}=0$ for $k=1,2$ and $1\le i\le a-1$. First, by \eqref{4.8}, $\alpha_{1a}\equiv 2s\equiv r\pmod q$. Since $0\le \alpha_{1a}\le r<q$, we have $\alpha_{1a}=r$. Next, assume to the contrary that $\alpha_{2i}>0$ for some $1\le i\le a-1$. Then by \eqref{4.8},
\begin{align*}
E\,&\le -q^{e-a}+q^{e-a-1}-8rq^{e-3a}+s(q^{a}-2)+r\cr
&<0 \kern 6.5em \text{(since $s\le q^{e-2a-1}(q-1)$)},
\end{align*}
which is a contradiction. Hence the claim is proved. Now the equation $E=0$ is precisely \eqref{4.6.1}. Therefore \eqref{4.4} follows from \eqref{4.7}.

If $s\le q^{e-3a-1}(q-1)$, then by the above argument, $E=0$ implies that $\alpha_{ki}=0$ for $k=1,2,3$ and $1\le i\le a-1$, in which case the equation $E=0$ becomes \eqref{4.6.3}. Therefore \eqref{4.6.2} follows from \eqref{4.7}.
\end{proof}

\begin{rmk}\label{R4.2}\rm 
A necessary condition for \eqref{4.6.1} to have a solution for $\alpha_{3i}$ and $\alpha_{4i}$ ($1\le i\le a$) is that $s\equiv -7r\pmod{q-1}$ and $s\equiv r/2\pmod q$.
\end{rmk}

\begin{prop}\label{P4.2}
If $s\le q^{e-2a-1}(q-1)$, then
\begin{equation}\label{4.19}
C(N)=\sum_{(\epsilon_0,\epsilon_1,\dots)\in\mathcal E_1}\Bigl[\prod_{j\ge 0}(X_j+X_{j+1}+\cdots+X_{j+a-1})^{a_j+s_j}:X_0^{\epsilon_0}X_1^{\epsilon_1}\cdots\Bigr].
\end{equation}
If $s\le q^{e-3a-1}(q-1)$, then 
\begin{equation}\label{4.20}
C(N)=\sum_{(\epsilon_0,\epsilon_1,\dots)\in\mathcal E_2}\Bigl[\prod_{j\ge 0}(X_j+X_{j+1}+\cdots+X_{j+a-1})^{s_j}:X_0^{\epsilon_0}X_1^{\epsilon_1}\cdots\Bigr].
\end{equation}
\end{prop}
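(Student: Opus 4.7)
The plan is to deduce \eqref{4.19} from \eqref{4.4} via a chain of three classical identities: the base-$q$ form of Dickson's congruence (obtained from the base-$p$ version quoted in Section~2 by grouping base-$p$ digits in blocks of length $\log_p q$), the multinomial Vandermonde identity, and Lemma~\ref{L2.1}. Identity \eqref{4.20} follows from \eqref{4.6.2} by the same argument with the $4r$-factor and the shift by $e-3a$ suppressed.

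Expand $\alpha_{3i}$ and $\alpha_{4i}$ in base $q$. Since $4rq^{e-3a}=\sum_j a_j q^j$, the base-$q$ digit of $4r$ at position $j$ is $a_{j+(e-3a)}$. Define, for $0\le i\le a-1$ and $j\ge 0$,
\[
b_{ij}=\begin{cases}\alpha_{3,i+1}^{(j-(e-3a))}, & j\ge e-3a,\\ 0, & j<e-3a,\end{cases}\qquad c_{ij}=\alpha_{4,i+1}^{(j)}.
\]
The base-$q$ Dickson congruence then yields
\[
\binom{4r}{\alpha_{31},\dots,\alpha_{3a}}\binom s{\alpha_{41},\dots,\alpha_{4a}}\equiv_p\prod_{j\ge 0}\binom{a_j}{b_{0j},\dots,b_{a-1,j}}\binom{s_j}{c_{0j},\dots,c_{a-1,j}},
\]
and a nonvanishing factor on the right forces $\sum_i b_{ij}=a_j$ and $\sum_i c_{ij}=s_j$, which automatically encodes \eqref{4.5} and \eqref{4.6}. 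A direct computation that absorbs the shift by $e-3a$ converts the weighted sum \eqref{4.6.1} into $\sum_{i,j}(b_{ij}+c_{ij})q^{i+j}=S_1$.

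Setting $a_{ij}:=b_{ij}+c_{ij}$, the multinomial Vandermonde identity
\[
\sum_{\substack{b_{ij}+c_{ij}=a_{ij}\\\sum_i b_{ij}=a_j}}\binom{a_j}{b_{0j},\dots,b_{a-1,j}}\binom{s_j}{c_{0j},\dots,c_{a-1,j}}=\binom{a_j+s_j}{a_{0j},\dots,a_{a-1,j}}
\]
collapses the $(b,c)$-sum into a sum over $a_{ij}$ with $\sum_i a_{ij}=a_j+s_j$ and $\sum_{i,j}a_{ij}q^{i+j}=S_1$. Writing $\epsilon_k:=\sum_{i+j=k}a_{ij}$, the condition $\sum_k\epsilon_k=|4r|_q+|s|_q$ is automatic from $\sum_i a_{ij}=a_j+s_j$, so $(\epsilon_0,\epsilon_1,\dots)$ ranges exactly over $\mathcal E_1$. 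Lemma~\ref{L2.1}, applied with $m=a-1$ and outer exponents $a_j+s_j$, identifies the resulting inner sum with the coefficient of $X_0^{\epsilon_0}X_1^{\epsilon_1}\cdots$ in $\prod_j(X_j+\cdots+X_{j+a-1})^{a_j+s_j}$, proving \eqref{4.19}.

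The main bookkeeping obstacle is keeping the shift by $e-3a$ transparent: the base-$q$ digits of $\alpha_{3i}$ must be aligned with $a_j$ (which vanishes for $j<e-3a$), and the two scales $q^{e-3a+i-1}$ and $q^{i-1}$ in \eqref{4.6.1} must collapse into the single scale $q^{i+j}$ after the change of variables. Once that alignment is set up, the rest is a formal application of Dickson (base $q$), Vandermonde, and Lemma~\ref{L2.1}.
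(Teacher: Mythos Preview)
Your proof is correct and follows essentially the same route as the paper: pass to base-$q$ digits via Dickson, recast \eqref{4.6.1} as the condition $(\epsilon_0,\epsilon_1,\dots)\in\mathcal E_1$, and invoke Lemma~\ref{L2.1}. The only cosmetic difference is that you merge the $b$- and $c$-sums via the multinomial Vandermonde identity before applying Lemma~\ref{L2.1} once, whereas the paper applies Lemma~\ref{L2.1} separately to each factor and then merges via the coefficient-convolution identity $[FG:X^\epsilon]=\sum_{c+d=\epsilon}[F:X^c][G:X^d]$; these are two orderings of the same computation.
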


\begin{proof}
Let $\alpha_{3i}, \alpha_{4i}$, $1\le i\le a$, be nonnegative integers satisfying \eqref{4.5} -- \eqref{4.6.1}. Write
\begin{gather}\label{4.21}
\alpha_{3i}q^{e-3a}=\sum_{j\ge 0}\alpha_{3i}^{(j)}q^j,\\ \label{4.22}
\alpha_{4i}=\sum_{j\ge 0}\alpha_{4i}^{(j)}q^j,
\end{gather}
where $\alpha_{3i}^{(j)},\alpha_{4i}^{(j)}\in\{0,\dots,q-1\}$. Then
\[
\binom{4r}{\alpha_{31},\dots,\alpha_{3a}}\equiv_p0
\]
unless
\begin{equation}\label{4.22}
\sum_{i=1}^{a}\alpha_{3i}^{(j)}=a_j\quad \text{for all}\ j\ge 0,
\end{equation}
and when \eqref{4.22} is satisfied,
\begin{equation}\label{4.23}
\binom{4r}{\alpha_{31},\dots,\alpha_{3a}}\equiv_p\prod_{j\ge 0}\binom{a_j}{\alpha_{31}^{(j)},\dots,\alpha_{3a}^{(j)}}.
\end{equation}
Similarly, 
\[
\binom{s}{\alpha_{41},\dots,\alpha_{4a}}\equiv_p0
\]
unless 
\begin{equation}\label{4.24} 
\sum_{i=1}^{a}\alpha_{4i}^{(j)}=s_j\quad \text{for all}\ j\ge 0,
\end{equation}
and when \eqref{4.24} is satisfied,
\begin{equation}\label{4.25}
\binom{s}{\alpha_{41},\dots,\alpha_{4a}}\equiv_p\prod_{j\ge 0}\binom{s_j}{\alpha_{41}^{(j)},\dots,\alpha_{4a}^{(j)}}.
\end{equation}
We have
\begin{equation}\label{4.26}
\sum_{i=1}^{a}\alpha_{3i}q^{e-3a+i-1}+\sum_{i=1}^{a}\alpha_{4i}q^{i-1}=\sum_{i=1}^{a}\sum_{j\ge 0}(\alpha_{3i}^{(j)}+\alpha_{4i}^{(j)})q^{i+j-1}=\sum_{k\ge 0}(c_k+d_k)q^k,
\end{equation}
where
\begin{gather}\label{4.27}
c_k=\sum_{i+j=k+1}\alpha_{3i}^{(j)},\\ \label{4.27.1}
d_k=\sum_{i+j=k+1}\alpha_{4i}^{(j)}.
\end{gather}
Assume that \eqref{4.22} and \eqref{4.24} are satisfied. Then
\[
\sum_{k\ge 0}(c_k+d_k)=\sum_{j\ge 0}\sum_{i=1}^a(\alpha_{3i}^{(j)}+\alpha_{4i}^{(j)})=\sum_{j\ge 0}(a_j+s_j)=|4r|_q+|s|_q.
\]
Thus by \eqref{4.16} and \eqref{4.26} we see that \eqref{4.6.1} is equivalent to
\begin{equation}\label{4.28}
(c_0+d_0, c_1+d_1,\dots)\in\mathcal E_1. 
\end{equation}
Now we have
\begin{align*}
C(N)\,&=\sum_{\substack{\alpha_{3i},\alpha_{4i}\cr \eqref{4.5},\eqref{4.6},\eqref{4.6.1}}} \binom{4r}{\alpha_{31},\dots,\alpha_{3a}} \binom{s}{\alpha_{41},\dots,\alpha_{4a}} \kern 6.7em\text{(by \eqref{4.4})}\cr
&=\sum_{\substack{\alpha_{3i}^{(j)},\alpha_{4i}^{(j)}\cr \eqref{4.22},\eqref{4.24},\eqref{4.28}}}\prod_{j\ge0}\binom{a_j}{\alpha_{31}^{(j)},\dots,\alpha_{3a}^{(j)}}\binom{s_j}{\alpha_{41}^{(j)},\dots,\alpha_{4a}^{(j)}}\kern 0.8em\text{(by \eqref{4.23}, \eqref{4.25})}\cr
&=\sum_{(c_0+d_0,c_1+d_1,\dots)\in\mathcal E_1}\biggl(\sum_{\substack{\alpha_{3i}^{(j)}\cr \eqref{4.22},\eqref{4.27}}} \prod_{j\ge0}\binom{a_j}{\alpha_{31}^{(j)},\dots,\alpha_{3a}^{(j)}}\biggr)\cr
&\kern8.5em \cdot\biggl(\sum_{\substack{\alpha_{4i}^{(j)}\cr \eqref{4.24},\eqref{4.27.1}}} \prod_{j\ge0}\binom{s_j}{\alpha_{41}^{(j)},\dots,\alpha_{4a}^{(j)}}\biggr)\cr
&=\sum_{(c_0+d_0,c_1+d_1,\dots)\in\mathcal E_1}\biggl(\Bigl[\prod_{j\ge 0}(X_j+X_{j+1}+\cdots+X_{j+a-1})^{a_j}:X_0^{c_0}X_1^{c_1}\cdots\Bigr]\biggr)\cr
&\kern8.5em \cdot\biggl(\Bigl[\prod_{j\ge 0}(X_j+X_{j+1}+\cdots+X_{j+a-1})^{s_j}:X_0^{d_0}X_1^{d_1}\cdots\Bigr]\biggr)\cr
&\kern25em \text{(by Lemma~\ref{L2.1})}\cr
&=\sum_{(\epsilon_0,\epsilon_1,\dots)\in\mathcal E_1}\Bigl[\prod_{j\ge 0}(X_j+X_{j+1}+\cdots+X_{j+a-1})^{a_j+s_j}:X_0^{\epsilon_0}X_1^{\epsilon_1}\cdots\Bigr],
\end{align*}
which is \eqref{4.19}.

If $s\le q^{e-3a-1}(q-1)$, the proof of \eqref{4.20} is similar but simpler; one uses \eqref{4.6.2} instead of \eqref{4.4}.
\end{proof}

%%%%%%%%%%%%%%%%%%%%%%%%%%%%%%%%%%%%%%%%%%%
\subsection{The case $3a+2\le e<4a$}\
First note that in this case, $a\ge 3$.

\begin{prop}\label{P4.3}
Assume that $3a+2\le e<4a$. Let $u$ be a positive integer such that $30u-15<q$. Let $r=q-2u$ and $s=q-u+(15u-8)q$. Then
\begin{equation}\label{4.31}
C(N)=\binom{-u}{30u-15}\binom{-16u+7}7.
\end{equation}
\end{prop}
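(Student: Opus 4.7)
The approach is to compute $C(N)$ directly from Proposition~\ref{P4.2} and then reduce the resulting coefficient extraction to a single Vandermonde sum. First I would verify the hypotheses: that $r=q-2u\in(0,q)$ (easy), that $s=(q-u)+(15u-8)q$ satisfies the congruences of Remark~\ref{R4.2} (a two-line check modulo $q$ and modulo $q-1$), and that $s\le q^{e-2a-1}(q-1)$, which is immediate because $e-2a-1\ge a+1\ge 4$. Reading off the base-$q$ digits, $|4r|_q=q-8u+3$ and $|s|_q=q+14u-8$, with nonzero $a_j$ only at $j=e-3a,e-3a+1$ (values $q-8u,3$) and nonzero $s_j$ only at $j=0,1$ (values $q-u,15u-8$).

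The key preliminary step is to determine $\mathcal E_1$. Using $4r=3q+(q-8u)$, $8r=7q+(q-16u)$, and $q^{-1}(2s-r)=30u-15$, the base-$q$ expansion of $S_1$ has nonzero digits exactly at the five positions $0,\,e-3a-1,\,e-3a,\,e-2a-1,\,e-2a$ with values $30u-15,\,q-16u,\,7,\,q-8u,\,3$. These positions are pairwise distinct because $a\ge 3$ (forced by $3a+2\le e<4a$), and each digit lies in $\{0,\dots,q-1\}$ because $30u-15<q$ and $16u<q$. A direct tally gives $\sum_j e_j=2q+6u-5=|4r|_q+|s|_q$, so by the discussion preceding Lemma~\ref{L4.1}, $\mathcal E_1=\{(e_0,e_1,\dots)\}$ is a singleton and Proposition~\ref{P4.2} reduces $C(N)$ to a single coefficient extraction.

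The product $\prod_j(X_j+\cdots+X_{j+a-1})^{a_j+s_j}$ has exactly four nontrivial factors
\[
A_0=(X_0+\cdots+X_{a-1})^{q-u},\quad A_1=(X_1+\cdots+X_a)^{15u-8},
\]
\[
B_0=(X_{e-3a}+\cdots+X_{e-2a-1})^{q-8u},\quad B_1=(X_{e-3a+1}+\cdots+X_{e-2a})^3,
\]
and I want the coefficient of $X_0^{30u-15}X_{e-3a-1}^{q-16u}X_{e-3a}^7X_{e-2a-1}^{q-8u}X_{e-2a}^3$. Writing out which target variable occurs in which factor (using $e<4a$ so that $X_{e-3a-1},X_{e-3a}$ lie in both $A_0$ and $A_1$, while $X_{e-2a-1},X_{e-2a}$ lie only in $B_0,B_1$), the factor $X_{e-2a}^3$ must come entirely from $B_1$, which forces $X_{e-2a-1}^{q-8u}$ to come entirely from $B_0$, which in turn forces the $X_{e-3a}$-contribution from $B_0$ to vanish. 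Since $X_0$ appears only in $A_0$, its exponent there is $30u-15$. The remaining constraints leave a single free parameter $\beta'\in\{0,\dots,7\}$, with explicit expressions for the three $X_{e-3a-1}$- and $X_{e-3a}$-exponents in $A_0$ and $A_1$.

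Substituting and factoring the trinomial coefficient, the sum becomes
\[
C(N)=\binom{q-u}{30u-15}\sum_{\beta'=0}^{7}\binom{q-31u+15}{7-\beta'}\binom{15u-8}{\beta'}=\binom{q-u}{30u-15}\binom{q-16u+7}{7}
\]
by the Vandermonde identity. The final step is the standard congruence $\binom{q-n}{k}\equiv_p\binom{-n}{k}$ for $0\le k<q$, which follows from Vandermonde and $\binom{q}{i}\equiv_p 0$ for $0<i<q$; applying it with $k=30u-15$ and $k=7$ yields the claimed identity. The main obstacle is the bookkeeping in the third paragraph: correctly tracking which of the four factors each target variable lies in, and using the forced choices at the ``boundary'' variables $X_{e-2a},X_{e-2a-1},X_0$ to collapse the multinomial system down to one parameter; once that is in place, the Vandermonde sum and mod-$p$ reduction are routine.
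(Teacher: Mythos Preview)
Your argument is correct, but it takes a longer path than the paper's own proof. The key observation you missed is that, because $e\ge 3a+2$, one has $e-3a-1\ge 1$ and hence $s<q(q-1)\le q^{e-3a-1}(q-1)$. This means the \emph{stronger} hypothesis of Proposition~\ref{P4.2} is met, and formula~\eqref{4.20} (with $\mathcal E_2$ and only the digits $s_j$) applies directly. With $S_2=8rq^{e-3a-1}+q^{-1}(2s-r)=(30u-15)+(q-16u)q^{e-3a-1}+7q^{e-3a}$, the base-$q$ digit sum of $S_2$ is exactly $|s|_q=q+14u-8$, so $\mathcal E_2$ is a singleton and only the two factors $A_0,A_1$ appear. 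The coefficient extraction is then immediate:
\[
C(N)=\bigl[(X_0+\cdots+X_{a-1})^{q-u}(X_1+\cdots+X_a)^{15u-8}:X_0^{30u-15}X_{e-3a-1}^{q-16u}X_{e-3a}^7\bigr],
\]
and since the total degree $q+14u-8$ matches the sum of the target exponents, one factors off $X_0^{30u-15}$ and reads off $\binom{q-u}{30u-15}\binom{q-16u+7}{7}$ in one step---no Vandermonde summation required.

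Your route via \eqref{4.19} and $\mathcal E_1$ is not wrong: the two extra factors $B_0,B_1$ you introduce are indeed forced to contribute the single monomials $X_{e-2a-1}^{q-8u}$ and $X_{e-2a}^3$ with coefficient $1$, so after peeling them off you recover exactly the paper's computation. The Vandermonde step at the end is also avoidable, since once all the degree is concentrated in $X_{e-3a-1}$ and $X_{e-3a}$ the two remaining factors combine into a single power $(X_{e-3a-1}+X_{e-3a})^{q-16u+7}$. In short: same destination, but you carried more luggage than necessary.
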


\begin{proof}
First note that $0<r<q$ and $0<s<q(q-1)\le q^{e-3a-1}(q-1)$. Clearly,
\begin{equation}\label{4.32}
s_j=\begin{cases}
q-u&\text{if}\ j=0,\cr
15u-8&\text{if}\ j=1,\cr
0&\text{if}\ j>1.
\end{cases}
\end{equation}
We have 
\begin{equation}\label{4.33}
S_2=8rq^{e-3a-1}+q^{-1}(2s-r)=30u-15+(q-16u)q^{e-3a-1}+7q^{e-3a}=\sum_{j\ge 0}e_jq^j, 
\end{equation}
where
\begin{equation}\label{4.34}
e_j=\begin{cases}
30u-15&\text{if}\ j=0,\cr
q-16u&\text{if}\ j=e-3a-1,\cr
7&\text{if}\ j=e-3a,\cr
0&\text{otherwise}.
\end{cases}
\end{equation}
Note that $\sum_{j\ge 0}e_j=q+14u-8=|s|_q$. Hence $\mathcal E_2=\{(e_0,e_1,\dots)\}$. By \eqref{4.20},
\begin{align*}
C(N)
&=\Bigl[\prod_{j\ge 0}(X_j+X_{j+1}+\cdots+X_{j+a-1})^{s_j}:X_0^{e_0}X_1^{e_1}\cdots\Bigr]\cr
&=\Bigl[(X_0+\cdots+X_{a-1})^{q-u}(X_1+\cdots+X_{a})^{15u-8}:X_0^{30u-15}X_{e-3a-1}^{q-16u}X_{e-3a}^7\Bigr]\cr
&=\binom{q-u}{30u-15}\Bigl[(X_1+\cdots+X_{a-1})^{q-16u+7}:X_{e-3a-1}^{q-16u}X_{e-3a}^7\Bigr]\cr
&\kern22.2em \text{(since $a-1\ge e-3a$)}\cr
&=\binom{q-u}{30u-15}\binom{q-16u+7}7\equiv_p\binom{-u}{30u-15}\binom{-16u+7}7.
\end{align*}
\end{proof}

\begin{cor}\label{C4.4}
Assume that $3a+2\le e<4a$ and that $f_{a,q}$ is a PP of $\f_{q^e}$. Then $q\in\{3,3^2,3^3,3^4,5,5^2,7,11,13\}$.
\end{cor}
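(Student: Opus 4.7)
The plan is to combine Proposition~\ref{P4.3} with Hermite's criterion. Recall that if $f_{a,q}$ is a PP of $\f_{q^e}$, Hermite's criterion forces $C(N)\equiv 0\pmod p$ for every $0<N<q^e-1$; and Proposition~\ref{P4.3} supplies, for each integer $u\ge 1$ with $30u-15<q$, such an admissible $N$ together with the closed-form value
\[
C(N)\equiv \binom{-u}{30u-15}\binom{-16u+7}{7}\pmod p.
\]
Rewriting this expression as $\binom{31u-16}{u-1}\binom{16u-1}{7}$ (up to sign), the problem reduces to a purely arithmetic question: for which odd prime powers $q=p^k$ does this product vanish mod $p$ for \emph{every} positive integer $u$ with $30u-15<q$?

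First I would apply the test $u=1$, which produces $\binom{15}{7}=6435=3^2\cdot 5\cdot 11\cdot 13$. This test is available as soon as $q>15$, so the only characteristics that can survive for $q>15$ are $p\in\{3,5,11,13\}$. The odd prime powers $q\le 15$ are $\{3,5,7,9,11,13\}$, all already on the target list, so every characteristic $p\notin\{3,5,11,13\}$ is handled except $p=7$ with $q\ge 49$, which $u=1$ also eliminates since $\gcd(6435,7)=1$.

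For each of $p\in\{3,5,11,13\}$ I would push $u$ upward and test non-vanishing mod $p$ via Lucas' theorem (equivalently, Kummer's carry count) from the base-$p$ digits of $31u-16$, $u-1$, $16u-1$, and $7$. Short computations show that $u=2$ already disposes of $q=11^k$ with $k\ge 2$, and $u=3$ disposes of both $q=5^k$ with $k\ge 3$ and $q=13^k$ with $k\ge 2$. For $p=3$, however, the tests $u=1,2,3,4$ all vanish mod $3$ and one must climb to $u=5$: the base-$3$ expansions $139=12011_3$, $4=11_3$, $79=2221_3$, $7=21_3$ form carry-free pairs, so Lucas gives $\binom{139}{4}\binom{79}{7}\equiv 1\pmod 3$. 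The admissibility condition $135<q$ first holds at $q=3^5=243$, so every $q=3^k$ with $k\ge 5$ is killed and only $q\in\{3,9,27,81\}$ survives. Assembling the outcomes across the four characteristics yields the claimed list.

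The main obstacle is the characteristic $p=3$: the four smallest admissible values of $u$ all give products divisible by $3$, and without the base-$p$ digit viewpoint it is not obvious where the first non-vanishing $u$ occurs. Once $u=5$ has been located via Lucas/Kummer, everything else is routine arithmetic.
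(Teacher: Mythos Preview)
Your proposal is correct and follows essentially the same approach as the paper: apply Proposition~\ref{P4.3} for $u=1,\dots,5$, check which primes $p$ divide $C(N)$ in each case, and sieve out the surviving prime powers $q$. The paper organizes the sieve by range of $q$ and records the explicit integer factorizations of $C(N)$ for $u=1,\dots,5$; you organize it by characteristic $p$ and use Lucas' theorem for the delicate $p=3$, $u=5$ verification. These are cosmetic differences only---the underlying argument is identical.
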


\begin{proof}
By Hermite's criterion, $C(N)=0$. By \eqref{4.31},
\[
C(N)=\begin{cases}
3^2\cdot 5\cdot 11\cdot 13&\text{if $u=1$ and $q>15$},\cr
2\cdot 3^2\cdot 5^2\cdot 13\cdot 23\cdot 29\cdot 31 &\text{if $u=2$ and $q>45$},\cr
2\cdot 3\cdot 7\cdot 11^2\cdot 19\cdot 23\cdot 41\cdot 43\cdot 47&\text{if $u=3$ and $q>75$},\cr
2^2\cdot 3^4\cdot 19\cdot 29\cdot 31\cdot 53\cdot 59\cdot 61\cdot 107&\text{if $u=4$ and $q>105$},\cr
2\cdot 5\cdot 11\cdot 13\cdot 17\cdot 19\cdot 23\cdot 37\cdot 73\cdot 79\cdot 137\cdot 139&\text{if $u=5$ and $q>135$}.
\end{cases}
\]
The conditions $C(N)=0$ with $1\le u\le 5$ and $q>30u-15$ give the following possibilities for $p$ and $q$.
\[
\begin{tabular}{c|c|c|c}
range of $q$ & $u$ & $p$ & $q$ \\ \hline
$q\le 15$ & & & $3,3^2,5,7,11,13$\\
$15<q\le 45$ & $u=1$ & $3,5,11,13$ & $3^3,5^2$\\
$45<q\le 75$ & $1\le u\le 2$ & $3,5,13$ & none \\
$75<q\le 105$ & $1\le u\le 3$ & $3$ & $3^4$ \\ 
$105<q\le 135$ & $1\le u\le 4$ & $3$ & none \\
$135<q$ & $1\le u\le 5$ & none & none 
\end{tabular}
\]
Hence the claim.
\end{proof}

We now eliminate the $q$'s in Corollary~\ref{C4.4}.

\subsubsection{The case $q=13$}

Let $r=5$ and $s=9+4q$. The base-$q$ digits of $s$ and $S_2$ are given below.
\[
\begin{tabular}{c|ccccc}
digit position & $0$ & $1$ &  \kern5em & &$e-3a$ \\ \hline
$s$ & $9$ & $4$  \\
$S_2$ &$9$ &&&$1$ & $3$
\end{tabular}
\]
By \eqref{4.20},
\[
C(N)=[(X_0+\cdots+X_{a-1})^9(X_1+\cdots+X_{a})^4:X_0^9X_{e-3a-1}X_{e-3a}^3]=\binom 41\not\equiv_{13}0.
\]

\subsubsection{The case $q=11$}

Let $r=3$ and $s=7+2q$. The base-$q$ digits of $s$ and $S_2$ are given below.
\[
\begin{tabular}{c|ccccc}
digit position & $0$ & $1$  & \kern5em & &$e-3a$\\ \hline
$s$ & $7$ & $2$  \\
$S_2$ &$5$ &&& $2$ & $2$
\end{tabular}
\]
We have
\begin{align*}
C(N)&=[(X_0+\cdots+X_{a-1})^7(X_1+\cdots+X_{a})^2:X_0^5X_{e-3a-1}^2X_{e-3a}^2]\cr
&=\binom 75[(X_1+\cdots+X_{a-1})^4:X_{e-3a-1}^2X_{e-3a}^2]=\binom 75\binom 42\not\equiv_{11}0.
\end{align*}

\subsubsection{The case $q=7$}

Let $r=1$ and $s=4+q$. The base-$q$ digits of $s$ and $S_2$ are given below.
\[
\begin{tabular}{c|ccccc}
digit position & $0$ & $1$ & \kern5em & &$e-3a$ \\ \hline
$s$ & $4$ & $1$ \\
$S_2$ &$3$ &&& $1$ & $1$
\end{tabular}
\]
We have
\[
C(N)=[(X_0+\cdots+X_{a-1})^4(X_1+\cdots+X_{a}):X_0^3X_{e-3a-1}X_{e-3a}]=\binom 43\binom 21\not\equiv_70.
\]

\subsubsection{The case $q=5^2$}

First assume that $3a+3\le e<4a$. Let $r=3$ and $s=14+12q+q^{e-3a}$. The base-$q$ digits of $s$, $q^{e-3a}4r$ and $S_1$ are given below.
\[
\begin{tabular}{c|ccccccc}
digit position & $0$ & $1$ & \kern4em & & $e-3a$ & \kern4em & $e-2a-1$ \\ \hline
$s$ & $14$ & $12$ &&& $1$\\
$q^{e-3a}4r$ &&&&& $12$ \\
$S_1$ & $0$ &$1$ && $1$ & $1$ && $12$
\end{tabular}
\]
Let
\[
F=(X_0+\cdots+X_{a-1})^{14}(X_1+\cdots+X_{a})^{12}(X_{e-3a}+\cdots+X_{e-2a-1})^{13}.
\]
When $e\ge 3a+4$, we have $\mathcal E_1=\{(\epsilon_{i0},\epsilon_{i1},\dots):1\le i\le 4\}$, where
\[
\begin{array}{rcccccccccl}
&\scriptstyle 0 &\scriptstyle 1 &\kern4em &&&\scriptstyle e-3a&\kern4em & &\scriptstyle e-2a-1 \cr   
(\epsilon_{10},\epsilon_{11},\dots)=(\kern-0.5em&25 &&&&1&1&&&12&\kern-1em), \cr
(\epsilon_{20},\epsilon_{21},\dots)=(\kern-0.5em&&1&&25&&1&&&12&\kern-1em), \cr
(\epsilon_{30},\epsilon_{31},\dots)=(\kern-0.5em&&1 &&&26&&&&12&\kern-1em), \cr
(\epsilon_{40},\epsilon_{41},\dots)=(\kern-0.5em&&1 &&&1&1&&25&11&\kern-1em).
\end{array}
\]
In the above, if $e=3a+3$, then 
\[
\begin{array}{rcccccccl}
&\scriptstyle 0 &\scriptstyle 1 &\scriptstyle 2&\scriptstyle 3&\kern4em & &\scriptstyle e-2a-1 \cr
(\epsilon_{20},\epsilon_{21},\dots)=(\kern-0.5em&&26&&1&&&12&\kern-1em). \end{array}
\]
In both cases, we find that
\[
[F:X_0^{\epsilon_{i0}}X_1^{\epsilon_{i1}}\cdots]=
\begin{cases}
0&\text{if}\ i=1,3,4,\cr
3&\text{if}\ i=2.
\end{cases}
\]
By \eqref{4.19},
\[
C(N)=\sum_{i=1}^4[F:X_0^{\epsilon_{i0}}X_1^{\epsilon_{i1}}\cdots]=3\not\equiv_5 0.
\]

Now assume that $e=3a+2$. Let $r=3$ and $s=14+13q$. The base-$q$ digits of $s$ and $S_2$ are given below.
\[
\begin{tabular}{c|ccc}
digit position & $0$ & $1$ & $2$\\ \hline
$s$ & $14$ & $13$ \\
$S_2$ &$2$ && $1$ 
\end{tabular}
\]
We have $\mathcal E_2=\{(2,25)\}$. Hence
\[
C(N)=[(X_0+\cdots+X_{a-1})^{14}(X_1+\cdots+X_{a})^{13}:X_0^2X_1^{25}]=\binom{14}2\not\equiv_5 0.
\]

\subsubsection{The case $q=5$}

First assume that $3a+3\le e<4a$. Let $r=3$ and $s=4+2q+q^{e-3a}$. The base-$q$ digits of $s$, $q^{e-3a}4r$ and $S_1$ are given below.
\[
\begin{tabular}{c|ccccccccc}
digit position & $0$ & $1$ & \kern4em & & $e-3a$ & &\kern4em & $e-2a-1$ \\ \hline
$s$ & $4$ & $2$ &&& $1$\\
$q^{e-3a}4r$ &&&&& $2$ & $2$\\
$S_1$ & $0$ &$1$ && $1$ &$0$&$1$ && $2$ & $2$
\end{tabular}
\]
Let
\begin{align*}
F=\,&(X_0+\cdots+X_{a-1})^4(X_1+\cdots+X_{a})^2(X_{e-3a}+\cdots+X_{e-2a-1})^3\cr
&\cdot (X_{e-3a+1}+\cdots+X_{e-2a})^2.
\end{align*}
When $e\ge 3a+4$ and $a\ge 4$, we have $\mathcal E_1=\{(\epsilon_{i0},\epsilon_{i1},\dots):1\le i\le 5\}$, where
\[
\begin{array}{rcccccccccccl}
&\scriptstyle 0 &\scriptstyle 1 &\kern4em &&&\scriptstyle e-3a&&\kern4em & &\scriptstyle e-2a-1 \cr   
(\epsilon_{10},\epsilon_{11},\dots)=(\kern-0.5em&5&0&&&1&&1&&&2&2&\kern-0.5em), \cr
(\epsilon_{20},\epsilon_{21},\dots)=(\kern-0.5em&&1&&5&0&&1&&&2&2&\kern-0.5em), \cr
(\epsilon_{30},\epsilon_{31},\dots)=(\kern-0.5em&&1 &&&1&5&&&&2&2&\kern-0.5em), \cr
(\epsilon_{40},\epsilon_{41},\dots)=(\kern-0.5em&&1 &&&1&&1&&5&1&2&\kern-0.5em),\cr
(\epsilon_{50},\epsilon_{51},\dots)=(\kern-0.5em&&1 &&&1&&1&&&7&1&\kern-0.5em).
\end{array}
\]
In the above, if $e=3a+3$, then
\[
\begin{array}{rcccccccccl}
&\scriptstyle 0 &\scriptstyle 1 &\scriptstyle 2&\scriptstyle 3&&\kern4em & &\scriptstyle e-2a-1 \cr   
(\epsilon_{20},\epsilon_{21},\dots)=(\kern-0.5em&&6&0&&1&&&2&2&\kern-0.5em);
\end{array}
\]
if $a=3$, then 
\[
\begin{array}{rccccccccl}
&\scriptstyle 0 &\scriptstyle 1 &\kern4em &&\scriptstyle e-3a& &\scriptstyle e-2a-1 \cr
(\epsilon_{40},\epsilon_{41},\dots)=(\kern-0.5em&&1 &&1&&6&1&2&\kern-0.5em).
\end{array}
\]
In all these cases, we find that
\[
[F:X_0^{\epsilon_{i0}}X_1^{\epsilon_{i1}}\cdots]=
\begin{cases}
0&\text{if}\ i=1,3,4,5,\cr
3&\text{if}\ i=2.
\end{cases}
\]
Hence
\[ 
C(N)=\sum_{i=1}^5[F:X_0^{\epsilon_{i0}}X_1^{\epsilon_{i1}}\cdots]=3\not\equiv_5 0.
\]

Now assume that $e=3a+2$. Let $r=3$ and $s=4+3q$. The base-$q$ digits of $s$ and $S_2$ are given below.
\[
\begin{tabular}{c|cccc}
digit position & $0$ & $1$ & $2$ &3\\ \hline
$s$ & $4$ & $3$ \\
$S_2$ & $2$ & $0$ &$0$ & $1$ 
\end{tabular}
\]
We have $\mathcal E_2=\{(2,0,5)\}$. Hence
\[
C(N)=[(X_0+\cdots+X_{a-1})^4(X_1+\cdots+X_{a})^3:X_0^2X_2^5]=\binom42\not\equiv_5 0.
\]

\subsubsection{The case $q=3^4$}

Let $r=q-10$ and $s=(q-5)+67q$. First assume that $3a+3\le e<4a$. The base-$q$ digits of $s$ and $S_2$ are given below.
\[
\begin{tabular}{c|ccccc}
digit position & $0$ & $1$ & \kern4em &$e-3a-1$ &  \\ \hline
$s$ & $76$ & $67$ \\
$S_2$ &$54$ & $1$ &&$1$ &$7$  
\end{tabular}
\]
Let 
\[
F=(X_0+\cdots+X_{a-1})^{76}(X_1+\cdots+X_{a})^{67}.
\]
When $e\ge 3a+4$, we have $\mathcal E_2=\{(\epsilon_{i0},\epsilon_{i1},\dots):1\le i\le 3\}$, where
\[
\begin{array}{rccccccl}
&\scriptstyle 0 &\scriptstyle 1 &\kern4em &&&\scriptstyle e-3a \cr   
(\epsilon_{10},\epsilon_{11},\dots)=(\kern-0.5em&135&0&&&1&7&\kern-0.5em), \cr
(\epsilon_{20},\epsilon_{21},\dots)=(\kern-0.5em&54&1&&81&0&7&\kern-0.5em), \cr
(\epsilon_{30},\epsilon_{31},\dots)=(\kern-0.5em&54&1 &&&82&6&\kern-0.5em).
\end{array}
\]
In the above, if $e=3a+3$, then
\[
\begin{array}{rccccl}   
(\epsilon_{20},\epsilon_{21},\dots)=(\kern-0.5em&54&82&0&7&\kern-0.5em)
\end{array}.
\] 
In both cases, we find that
\[ 
[F:X_0^{\epsilon_{i0}}X_1^{\epsilon_{i1}}\cdots]=
\begin{cases}
0&\text{if}\ i=1,\cr
2&\text{if}\ i=2,3.
\end{cases}
\]
Hence $C(N)=\sum_{i=1}^3[F:X_0^{\epsilon_{i0}}X_1^{\epsilon_{i1}}\cdots]=4\not\equiv_3 0$.

Now assume that $e=3a+2$. In this case, the digits of $S_2$ are modified as follows.
\[
\begin{tabular}{c|ccc}
digit position & $0$ & $1$  &2  \\ \hline
$s$ & $76$ & $67$ \\
$S_2$ &$54$ & $2$ &$7$  
\end{tabular}
\]
We have $\mathcal E_2=\{(\epsilon_{i0},\epsilon_{i1},\dots):i=1,2\}$, where 
\[
\begin{array}{rcccl}  
(\epsilon_{10},\epsilon_{11},\dots)=(\kern-0.5em&135&1&7&\kern-0.5em), \cr
(\epsilon_{20},\epsilon_{21},\dots)=(\kern-0.5em&54&83&6&\kern-0.5em).
\end{array}
\]
It is easy to see that 
\[ 
[F:X_0^{\epsilon_{i0}}X_1^{\epsilon_{i1}}\cdots]=
\begin{cases}
0&\text{if}\ i=1,\cr
1&\text{if}\ i=2.
\end{cases}
\]
Hence $C(N)=\sum_{i=1}^2[F:X_0^{\epsilon_{i0}}X_1^{\epsilon_{i1}}\cdots]=1\not\equiv_3 0$.

\subsubsection{The case $q=3^3$}
Let $r=1$ and $s=14+5q$. The base-$q$ digits of $s$ and $S_2$ are given below.
\[
\begin{tabular}{c|ccccc}
digit position & $0$ & $1$ &\kern4em & &$e-3a$  \\ \hline
$s$ & $14$ & $5$ \\
$S_2$ &$11$ & &&$8$  
\end{tabular}
\]
We have
\[
C(N)=[(X_0+\cdots+X_{a-1})^{14}(X_1+\cdots+X_{a})^5:X_0^{11}X_{e-3a-1}^8]=\binom{14}{11}\not\equiv_3 0.
\]

\subsubsection{The case $q=3^2$}

First assume that $3a+3\le e<4a$. Let $r=1$ and $s=5+2q+q^2+q^{e-3a}$. The base-$q$ digits of $s$, $q^{e-3a}4r$ and $S_1$ are given below.
\[
\begin{tabular}{c|cccccccc}
digit position & $0$ & $1$ && \kern4em & & $e-3a$ & \kern4em & $e-2a-1$ \\ \hline
$s$ & $5$ & $2$ &$1$&&& $1$\\
$q^{e-3a}4r$ &&&&&& $4$ \\
$S_1$ & $5$ &$2$ &&& $1$ &$1$ && $4$
\end{tabular}
\]
We have
\begin{align*}
&C(N)\cr
&=[(X_0+\cdots+X_{a-1})^5(X_1+\cdots+X_{a})^2(X_2+\cdots+X_{a+1})(X_{e-3a}+\cdots+X_{e-2a-1})^5\cr
&\kern1em:X_0^5X_1^2X_{e-3a-1}X_{e-3a}X_{e-2a-1}^4]\cr
&=\binom55\binom22\binom54[(X_2+\cdots+X_{a+1}):X_{e-3a-1}]=5\not\equiv_3 0.
\end{align*}

Now assume that $e=3a+2$. Let $r=1$ and $s=5+4q$. The base-$q$ digits of $s$ and $S_2$ are given below.
\[
\begin{tabular}{c|ccc}
digit position & $0$ & $1$ & 2\\ \hline
$s$ & 5 & 4\\
$S_2$ & &&1
\end{tabular}
\]
We have $\mathcal E_2=\{(0,9)\}$ and
\[
C(N)=[(X_0+\cdots+X_{a-1})^5(X_1+\cdots+X_{a})^4:X_1^9]=1\not\equiv_3 0.
\]

\subsubsection{The case $q=3$}

Let $r=1$ and $s=2+q^{e-3a}$.
The base-$q$ digits of $s$, $q^{e-3a}4r$ and $S_1$ are given below. 
\[
\begin{tabular}{c|cccccccccc}
digit position & $0$ & $1$ & &\kern4em & & $e-3a$ && \kern4em & $e-2a-1$ \\ \hline
$s$ & $2$ &0 &&&& $1$\\
$q^{e-3a}4r$ &&&&&& $1$ &1\\
$S_1$ & 1 & &&& $1$ &0 &1& &$1$ &1
\end{tabular}
\]
We have
\begin{align*}
&C(N)\cr
&=[(X_0+\cdots+X_{a-1})^2(X_{e-3a}+\cdots+X_{e-2a-1})^2(X_{e-3a+1}+\cdots+X_{e-2a})\cr
&\kern1em :X_0X_{e-3a-1}X_{e-3a+1}X_{e-2a-1}X_{e-2a}]\cr
&=\binom21 [(X_1+\cdots+X_{a-1})(X_{e-3a}+\cdots+X_{e-2a-1})^2:X_{e-3a-1}X_{e-3a+1}X_{e-2a-1}]\cr
&=\binom21\binom21 [(X_1+\cdots+X_{a-1}):X_{e-3a-1}]=4\not\equiv_3 0.
\end{align*}

%%%%%%%%%%%%%%%%%%%%%%%%%%%%%%%%%%%%%%%%%%%
\subsection{The case $e=3a+1$}

\subsubsection{The case $q>8$}
Let $r=q-2$ and $s=(q-1)+7q$.  The base-$q$ digits of $s$, $q^{e-3a}4r$ and $S_1$ are given below.
\[
\begin{tabular}{c|cccccc}
digit position & $0$ & 1 &&\kern4em & $e-2a-1$ \\ \hline
$s$ & $q-1$ & $7$\\
$q^{e-3a}4r$ && $q-8$ &3\\
$S_1$ & $q-1$ & 7&&& $q-8$ &3
\end{tabular}
\]
We have
\begin{align*}
C(N)\,&=[(X_0+\cdots+X_{a-1})^{q-1}(X_1+\cdots+X_{a})^{q-1}(X_2+\cdots+X_{a+1})^3\cr
&\kern1em :X_0^{q-1}X_1^7X_{a}^{q-8}X_{a+1}^3]\cr
&=\binom{q-1}7 [(X_2+\cdots+X_{a})^{q-8}(X_2+\cdots+X_{a+1})^3:X_{a}^{q-8}X_{a+1}^3]\cr
&\equiv_p -1\not\equiv_p 0.
\end{align*}

\subsubsection{The case $q=7$}

Let $r=1$ and $s=4+q$.  The base-$q$ digits of $s$, $q^{e-3a}4r$ and $S_1$ are given below.
\[
\begin{tabular}{c|cccc}
digit position & $0$ & 1 &\kern4em & $e-2a-1$ \\ \hline
$s$ & $4$ & $1$\\
$q^{e-3a}4r$ &&4\\
$S_1$ & $4$ & 1&&4
\end{tabular}
\]
We have
\[
C(N)=[(X_0+\cdots+X_{a-1})^4(X_1+\cdots+X_{a})^5:X_0^4X_1X_{a}^4]=\binom51 \not\equiv_7 0.
\]

\subsubsection{The case $q=5$}

Let $r=2$ and $s=1+q^2$.  First assume that $a\ge 3$. The base-$q$ digits of $s$, $q^{e-3a}4r$ and $S_1$ are given below.
\[
\begin{tabular}{c|cccccc}
digit position & $0$ & 1&2 &\kern4em& $e-2a-1$ \\ \hline
$s$ & $1$ & & $1$\\
$q^{e-3a}4r$ &&3&1\\
$S_1$ &1& 0&$1$ & &3&1
\end{tabular}
\]
Let
\[
F=(X_0+\cdots+X_{a-1})(X_1+\cdots+X_{a})^3(X_2+\cdots+X_{a+1})^2.
\]
We find that $C(N)=[F:X_0X_2X_{a}^3X_{a+1}]=3\not\equiv_5 0$.

When $a=2$, the digits of $S_1$ are modified as follows.
\[
\begin{tabular}{c|cccc}
digit position & $0$ & 1& 2 &3 \\ \hline
$s$ & $1$ & & $1$\\
$q^{e-3a}4r$ &&3&1\\
$S_1$ &1& 0&$4$ &1
\end{tabular}
\]
We find that $C(N)=[F:X_0X_2^4X_3]=2\not\equiv_5 0$.

\subsubsection{The case $q=3$}

Let $r=1$ and $s=2+q$.  First assume that $a\ge 3$. The base-$q$ digits of $s$, $q^{e-3a}4r$ and $S_1$ are given below.
\[
\begin{tabular}{c|cccccc}
digit position & $0$ &1 & &\kern4em& $e-2a-1$ \\ \hline
$s$ & $2$ & $1$\\
$q^{e-3a}4r$ &&1&1\\
$S_1$ &2& 0&$1$ & &1&1
\end{tabular}
\]
Let
\[
F=(X_0+\cdots+X_{a-1})^2(X_1+\cdots+X_{a})^2(X_2+\cdots+X_{a+1}).
\]
We find that $C(N)=[F:X_0^2X_2X_{a}X_{a+1}]=2\not\equiv_3 0$.

When $a=2$, the digits of $S_1$ is modified as follows. 
\[
\begin{tabular}{c|cccc}
digit position & $0$ &$1$ & 2&3 \\ \hline
$s$ & $2$ & $1$\\
$q^{e-3a}4r$ &&1&1\\
$S_1$ &2& 0&$2$ &1
\end{tabular}
\]
We find that $C(N)=[F:X_0^2X_2^2X_3]=1\not\equiv_3 0$.

%%%%%%%%%%%%%%%%%%%%%%%%%%%%%%%%%%%%%%%%%%%
%  section 5
%%%%%%%%%%%%%%%%%%%%%%%%%%%%%%%%%%%%%%%%%%%
\section{The case $e/3<a< e/2$}

The proof in this section is smilier to but simpler than Section 4. We will use the following notations. Let $0<r<q$ and $s>0$ be integers such that $s\equiv -3r\pmod{q-1}$ and $s\equiv r/2\pmod q$, i.e., $s\equiv r((q^2+1)/2-4q)\pmod{q(q-1)}$. Let 
\begin{equation}\label{5.2}
N=r(q^{e-a}+2q^{e-2a})+s.
\end{equation}
Write
\begin{gather}\label{5.14}
q^{e-2a}2r=\sum_{j\ge 0}a_jq^j,\\
\label{5.15}
s=\sum_{j\ge 0}s_jq^j,
\end{gather}
where $a_j,s_j\in\{0,\dots,q-1\}$. Let
\begin{gather}\label{5.15.1}
S_1=2rq^{e-a-1}+4rq^{e-2a-1}+q^{-1}(2s-r),\\ \label{5.15.2} 
S_2=4rq^{e-2a-1}+q^{-1}(2s-r),
\end{gather}
and
\begin{equation}\label{5.16}
\mathcal E_1=\Bigl\{(\epsilon_0,\epsilon_1,\dots):\epsilon_j\ge 0,\ \sum_{j\ge 0}\epsilon_j=|2r|_q+|s|_q,\ \sum_{j\ge 0}\epsilon_jq^j=S_1 \Bigr\},
\end{equation}
\begin{equation}\label{5.17}
\mathcal E_2=\Bigl\{(\epsilon_0,\epsilon_1,\dots):\epsilon_j\ge 0,\ \sum_{j\ge 0}\epsilon_j=|s|_q,\ \sum_{j\ge 0}\epsilon_jq^j=S_2 \Bigr\}.
\end{equation}
The elements of $\mathcal E_1$ can be determined as follows: Write
\begin{equation}\label{5.18}
S_1=\sum_{j\ge 0}e_jq^j,
\end{equation}
where $e_j\in\{0,\dots,q-1\}$. If $\sum_{j\ge 0}e_j>|2r|_q+|s|_q$, $\mathcal E_1=\emptyset$. If $\sum_{j\ge 0}e_j=|2r|_q+|s|_q$, $\mathcal E_1=\{(e_0,e_1,\dots)\}$. If $\sum_{j\ge 0}e_j<|2r|_q+|s|_q$, elements $(\epsilon_0,\epsilon_1,\dots)\in\mathcal E_1$ are obtained from $(e_0,e_1,\dots)$ through borrows in base-$q$. The elements of $\mathcal E_2$ are determined similarly.

\begin{lem}\label{L5.1}
If $s\le q^{e-a-1}(q-1)$, then $0<N<q^e-1$ and
\begin{equation}\label{5.4}
C(N)=\sum_{\alpha_{2i},\alpha_{3i}}\binom{2r}{\alpha_{21},\dots,\alpha_{2a}}\binom{s}{\alpha_{31},\dots,\alpha_{3a}},
\end{equation}
where the sum is over all nonnegative integers $\alpha_{2i},\alpha_{3i}$, $1\le i\le a$, subject to the conditions
\begin{gather}\label{5.5}
\sum_{i=1}^{a}\alpha_{2i}=2r,\\
\label{5.6}
\sum_{i=1}^{a}\alpha_{3i}=s,\\
\label{5.6.1}
\sum_{i=1}^{a}\alpha_{2i}q^{e-2a+i-1}+\sum_{i=1}^{a}\alpha_{3i}q^{i-1}=S_1.
\end{gather}
If $s\le q^{e-2a-1}(q-1)$, then 
\begin{equation}\label{5.6.2}
C(N)=\sum_{\alpha_{3i}}\binom{s}{\alpha_{31},\dots,\alpha_{3a}},
\end{equation}
where the sum is over all integers $\alpha_{3i}$, $1\le i\le a$, subject to \eqref{5.6} and
\begin{equation}\label{5.6.3}
\sum_{i=1}^{a}\alpha_{3i}q^{i-1}=S_2.
\end{equation}
\end{lem}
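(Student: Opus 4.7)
The plan is to mimic the proof of Lemma~4.1 with one less factor, since here we only have the terms $q^{e-a}$ and $q^{e-2a}$ in $N$ (the $q^{e-3a}$ factor has disappeared because $e-3a<0$ in this regime). First I would expand
\[
f_{a,q}^N = X^{-2N}\Bigl(\sum_{i=1}^a X^{q^{e-a+i}}\Bigr)^{r}\Bigl(\sum_{i=1}^a X^{q^{e-2a+i}}\Bigr)^{2r}\Bigl(\sum_{i=1}^a X^{q^i}\Bigr)^{s}
\]
and apply the multinomial theorem so that, modulo $X^{q^e}-X$, the expression becomes a sum over nonnegative integer tuples $(\alpha_{ki})$, $k=1,2,3$, $1\le i\le a$, of products of multinomial coefficients times $X^{E^*}$, with
\[
E = -2r(q^{e-a}+2q^{e-2a}) -2s + \alpha_{1a} + \sum_{i=1}^{a-1}\alpha_{1i}q^{e-a+i} + \sum_{i=1}^{a}\alpha_{2i}q^{e-2a+i} + \sum_{i=1}^{a}\alpha_{3i}q^{i},
\]
subject to the obvious constraints $\sum_i\alpha_{1i}=r$, $\sum_i\alpha_{2i}=2r$, $\sum_i\alpha_{3i}=s$.

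Next I would establish the two-sided estimate that forces $E^*=q^e-1$ to actually mean $E=0$. Assuming $s\le q^{e-a-1}(q-1)$, a lower bound using $r\le q^a/2$ (identical in spirit to the estimate in Lemma~4.1, with the $4rq^{e-3a}$ contribution removed) gives $E>-(q^e-1)$, while the upper bound uses $r<q$ and the standing hypothesis on $s$ to give $E<q^e-1$. Combined with the divisibility constraint inherited from the choice $s\equiv r/2\pmod q$, the congruence $\alpha_{1a}\equiv 2s\equiv r\pmod q$ and the bound $0\le \alpha_{1a}\le r<q$ force $\alpha_{1a}=r$, hence $\alpha_{1i}=0$ for $1\le i\le a-1$. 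Once this reduction is made, the equation $E=0$ coincides exactly with \eqref{5.6.1}, and \eqref{5.4} follows by summing the remaining multinomial coefficients.

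For the second, sharper statement I would strengthen the extremal argument: under the stronger hypothesis $s\le q^{e-2a-1}(q-1)$, suppose some $\alpha_{2i}>0$ with $1\le i\le a-1$. Then the term $\alpha_{2i}q^{e-2a+i}\le q^{e-a-1}$ is too small to offset the negative contribution $-4rq^{e-2a}$, and an explicit estimate patterned after the one used in Lemma~4.1 (with the $q^{e-3a}$ and $q^{e-a}$ contributions dropped) yields $E<0$, a contradiction. Hence $\alpha_{2a}=2r$, all other $\alpha_{2i}$ vanish, and the remaining equation $E=0$ reduces to \eqref{5.6.3}, proving \eqref{5.6.2}.

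The main obstacle is the bookkeeping in the two-sided bound on $E$; this is where the specific numerical requirement $s\equiv r((q^2+1)/2-4q)\pmod{q(q-1)}$ enters (a necessary condition for \eqref{5.6.1} to admit any solution, as in Remark~\ref{R4.2}). Once these estimates are checked, the rest is a routine specialization of the argument from Section 4, and in fact cleaner because one of the three size regimes present there does not occur.
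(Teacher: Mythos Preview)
Your overall strategy is the same as the paper's, and the lower bound $E>-(q^e-1)$, the reduction $\alpha_{1a}=r$ once $E=0$ is known, and the second-part argument forcing $\alpha_{2a}=2r$ are all correctly sketched. The gap is in your upper bound for $E$.

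You assert that ``the upper bound uses $r<q$ and the standing hypothesis on $s$ to give $E<q^e-1$,'' as in Lemma~4.1. But the hypothesis here is $s\le q^{e-a-1}(q-1)$, which is genuinely weaker than what made the Lemma~4.1 estimate go through (there one had $s\le q^{e-2a-1}(q-1)<q^{e-a-1}$, giving $sq^a<q^{e-1}$). With the present hypothesis, the crude maximum
\[
E\le rq^{e-1}-4rq^{e-2a}+s(q^{a}-2)
\]
can be close to $(q-1)q^{e-1}+(q-1)q^{e-1}\approx 2(q-1)q^{e-1}$, which exceeds $q^e-1$ for every odd $q$. So the direct transplant of the Lemma~4.1 bound fails.

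The paper closes this gap by feeding the congruence information into the upper bound rather than using it only afterward. Writing $E=k(q^e-1)$ with $k\ge 0$, one reads off from \eqref{5.8} that $-k\equiv -r+\alpha_{1a}\pmod q$, and since $0\le\alpha_{1a}\le r<q$ this forces $\alpha_{1a}\ge r-k$, hence $\sum_{i=1}^{a-1}\alpha_{1i}\le k$ and $\sum_{i=1}^{a-1}\alpha_{1i}q^{e-a+i}\le kq^{e-1}$. Substituting this (instead of the crude $rq^{e-1}$) into the upper estimate gives
\[
k(q^e-1)=E< k(q^{e-1}-1)+sq^a,
\]
whence $k< sq^a/\bigl((q-1)q^{e-1}\bigr)\le 1$, so $k=0$. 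After that, your argument for $\alpha_{1a}=r$ and for the second statement goes through unchanged. In short: the congruence $s\equiv r/2\pmod q$ is not merely a post-hoc device to pin down $\alpha_{1a}$; it has to be used inside the upper-bound step to make the estimate close.
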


\begin{proof}
The proof is almost identical to the proof of Lemma~\ref{L4.1}. Assume that $s\le q^{e-a-1}(q-1)$. Clearly, $0<N<q^e-1$. We have
\begin{align}\label{5.7}
f_{a,q}^N&=X^{-2r(q^{e-a}+2q^{e-2a})-2s}\Bigl(\sum_{i=1}^{a}X^{q^{e-a+i}}\Bigr)^r\Bigl(\sum_{i=1}^{a}X^{q^{e-2a+i}}\Bigr)^{2r}\Bigl(\sum_{i=1}^{a}X^{q^i}\Bigr)^s\\
&\equiv\sum_{\alpha_{ki}}\binom{r}{\alpha_{11},\dots,\alpha_{1a}}\binom{2r}{\alpha_{21},\dots,\alpha_{2a}}\binom{s}{\alpha_{31},\dots,\alpha_{3a}} X^{E^*}\pmod{X^{q^e}-X},\nonumber
\end{align}
where
\begin{equation}\label{5.8}
E=-2r(q^{e-a}+2q^{e-2a})-2s+\alpha_{1a}+\sum_{i=1}^{a-1}\alpha_{1i}q^{e-a+i}+\sum_{i=1}^{a}\alpha_{2i}q^{e-2a+i}+\sum_{i=1}^{a}\alpha_{3i}q^i,
\end{equation}
and $\alpha_{ki}\ge 0$ are integers such that
\begin{gather}\label{5.9}
\sum_{i=1}^{a}\alpha_{ki}=2^{k-1}r,\quad  k=1,2,\\
\label{5.10}
\sum_{i=1}^{a}\alpha_{3i}=s.
\end{gather}
Assume that $E^*=q^e-1$. We claim that $E=0$. First,
\begin{align*}
E\ge\,&-2r(q^{e-a}+2q^{e-2a})-2s+r+2rq^{e-2a+1}+sq\cr
=\,&-2rq^{e-a}+2r(q-2)q^{e-2a}+s(q-2)+r\cr
>\,&-(q^e-1) \kern 8.7em \text{(since $r\le q^{a}/2$)}.
\end{align*}
Write $E=k(q^e-1)$, where $k\ge 0$. Then by \eqref{5.8}, $-k\equiv -r+\alpha_{1a}\pmod q$, and hence $\alpha_{1a}\ge r-k$. Thus by \eqref{5.8},
\begin{align*}
k(q^e-1)\,&=E\le -2r(q^{e-a}+2q^{e-2a})-2s+r-k+kq^{e-1}+2rq^{e-a}+sq^{a}\cr
&<k(q^{e-1}-1)+sq^a,
\end{align*}
whence $k<sq^a/(q^e-q^{e-1})\le 1$. Hence the claim is proved.
By \eqref{5.8}, $\alpha_{1a}\equiv 2s\equiv r\pmod q$. Since $0\le \alpha_{1a}\le r<q$, we have $\alpha_{1a}=r$. Now the equation $E=0$ becomes \eqref{5.6.1}. Therefore \eqref{5.4} follows from \eqref{5.7}.

If $s\le q^{e-2a-1}(q-1)$, we claim that $\alpha_{2a}=2r$, i.e., $\alpha_{2i}=0$ for $1\le i\le a-1$.  Assume to the contrary that $\alpha_{2i}>0$ for some $1\le i\le a-1$. Then by \eqref{5.8},
\begin{align*}
E\,&\le -q^{e-a}+q^{e-a-1}-4rq^{e-2a}+s(q^{a}-2)+r\cr
&<0 \kern 6.5em \text{(since $s\le q^{e-2a-1}(q-1)$)},
\end{align*}
which is a contradiction. Hence the claim is proved.  
Now the equation $E=0$ becomes \eqref{5.6.3}. Therefore \eqref{5.6.2} follows from \eqref{5.7}.
\end{proof}

\begin{prop}\label{P5.2}
If $s\le q^{e-a-1}(q-1)$, then
\begin{equation}\label{5.19}
C(N)=\sum_{(\epsilon_0,\epsilon_1,\dots)\in\mathcal E_1}\Bigl[\prod_{j\ge 0}(X_j+X_{j+1}+\cdots+X_{j+a-1})^{a_j+s_j}:X_0^{\epsilon_0}X_1^{\epsilon_1}\cdots\Bigr].
\end{equation}
If $s\le q^{e-2a-1}(q-1)$, then 
\begin{equation}\label{5.20}
C(N)=\sum_{(\epsilon_0,\epsilon_1,\dots)\in\mathcal E_2}\Bigl[\prod_{j\ge 0}(X_j+X_{j+1}+\cdots+X_{j+a-1})^{s_j}:X_0^{\epsilon_0}X_1^{\epsilon_1}\cdots\Bigr],
\end{equation}
\end{prop}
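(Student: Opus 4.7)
The plan is to mirror the proof of Proposition~\ref{P4.2} almost verbatim, with the $\binom{4r}{\cdots}\binom{s}{\cdots}$ factorization there replaced by the $\binom{2r}{\cdots}\binom{s}{\cdots}$ factorization given by Lemma~\ref{L5.1}, and with the shift $q^{e-3a}$ replaced by $q^{e-2a}$. Starting from \eqref{5.4}, I would write out the base-$q$ expansions
\[
\alpha_{2i}q^{e-2a}=\sum_{j\ge 0}\alpha_{2i}^{(j)}q^j,\qquad \alpha_{3i}=\sum_{j\ge 0}\alpha_{3i}^{(j)}q^j,
\]
with digits in $\{0,\dots,q-1\}$. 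Dickson's congruence then forces the multinomials to vanish modulo $p$ unless $\sum_i\alpha_{2i}^{(j)}=a_j$ and $\sum_i\alpha_{3i}^{(j)}=s_j$ for every $j\ge 0$, in which case they factor as products over $j$ of multinomials in the digits.

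Next, I would translate the key condition \eqref{5.6.1} into a digit condition. Since
\[
\sum_{i=1}^a\alpha_{2i}q^{e-2a+i-1}+\sum_{i=1}^a\alpha_{3i}q^{i-1}=\sum_{k\ge 0}(c_k+d_k)q^k,
\]
where $c_k=\sum_{i+j=k+1}\alpha_{2i}^{(j)}$ and $d_k=\sum_{i+j=k+1}\alpha_{3i}^{(j)}$, and since the total digit sum equals $|2r|_q+|s|_q$ once the Dickson conditions hold, \eqref{5.6.1} is precisely the condition $(c_0+d_0,c_1+d_1,\dots)\in\mathcal E_1$. Interchanging the order of summation decouples the sums over the $\alpha_{2i}^{(j)}$ and the $\alpha_{3i}^{(j)}$, and each of these inner sums fits Lemma~\ref{L2.1} exactly, producing the coefficient extractions
\[
\Bigl[\prod_{j\ge 0}(X_j+\cdots+X_{j+a-1})^{a_j}:X_0^{c_0}X_1^{c_1}\cdots\Bigr]\quad\text{and}\quad\Bigl[\prod_{j\ge 0}(X_j+\cdots+X_{j+a-1})^{s_j}:X_0^{d_0}X_1^{d_1}\cdots\Bigr].
\]
Finally, the product of these two coefficient extractions, summed over all splittings $\epsilon_k=c_k+d_k$ for a fixed $(\epsilon_0,\epsilon_1,\dots)\in\mathcal E_1$, collapses into the single coefficient of $X_0^{\epsilon_0}X_1^{\epsilon_1}\cdots$ in the product polynomial $\prod_j(X_j+\cdots+X_{j+a-1})^{a_j+s_j}$, yielding \eqref{5.19}.

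For the second statement, the proof is strictly simpler: under the stronger hypothesis $s\le q^{e-2a-1}(q-1)$ one starts from \eqref{5.6.2}, which involves only the $\binom{s}{\alpha_{31},\dots,\alpha_{3a}}$ factor, so the decoupling step is unnecessary and a single application of Dickson's congruence followed by Lemma~\ref{L2.1} delivers \eqref{5.20}. I do not anticipate any genuine obstacle, since all the bookkeeping has already been validated in Proposition~\ref{P4.2}; the only care required is verifying that the digit-sum identity $\sum_k(c_k+d_k)=|2r|_q+|s|_q$ holds exactly (so that \eqref{5.6.1} really is equivalent to membership in $\mathcal E_1$ once the Dickson conditions are in place), which is automatic from \eqref{5.14}--\eqref{5.15} and the Dickson constraints.
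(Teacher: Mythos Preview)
Your proposal is correct and is exactly the approach the paper takes: the paper's proof of Proposition~\ref{P5.2} reads in its entirety ``Identical to the proof of Proposition~\ref{P4.2},'' and the substitutions you describe ($4r\to 2r$, $q^{e-3a}\to q^{e-2a}$, $\alpha_{3i},\alpha_{4i}\to\alpha_{2i},\alpha_{3i}$) are precisely the ones needed to transcribe that argument.
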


\begin{proof}
Identical to the proof of Proposition~\ref{P4.2}.
\end{proof}

%%%%%%%%%%%%%%%%%%%%%%%%%%%%%%%%%%%%%%%%%%%
\subsection{The case $2a+2\le e<3a$}

Note that in this case, $a\ge 3$.

\begin{prop}\label{P5.3}
Assume that $2a+2\le e<3a$. Let $u$ be a positive integer such that $14u-7<q$. Let $r=q-2u$ and $s=q-u+(7u-4)q$. Then
\begin{equation}\label{5.31}
C(N)=\binom{-u}{14u-7}\binom{-8u+3}3.
\end{equation}
\end{prop}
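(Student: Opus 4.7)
My plan is to mirror the proof of Proposition~\ref{P4.3}, with \eqref{5.20} from Proposition~\ref{P5.2} playing the role that \eqref{4.20} played there. First I would check the hypotheses: $0<r<q$ is clear from $14u-7<q$ and $u\ge 1$; the congruences $s\equiv -3r\pmod{q-1}$ and $s\equiv r/2\pmod q$ follow from writing $s=(q-u)+(7u-4)q$ and reducing; and $s\le q^{e-2a-1}(q-1)$ holds because $s=(7u-3)q-u<q^2-q\le q^{e-2a-1}(q-1)$ once one uses $q>14u-7$ and $e\ge 2a+2$.

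Next I would record the base-$q$ data. From the expression above, $s_0=q-u$, $s_1=7u-4$, $s_j=0$ for $j\ge 2$, giving $|s|_q=q+6u-4$. A direct calculation yields $2s-r=(14u-7)q$, so $q^{-1}(2s-r)=14u-7$, and
\[
S_2=(4q-8u)q^{e-2a-1}+(14u-7).
\]
Since $q>14u-7\ge 8u$ (with the small case $u=1$ handled separately using $q\ge 9$), we can split $4q-8u=3q+(q-8u)$, giving $S_2$ the base-$q$ digits $e_0=14u-7$, $e_{e-2a-1}=q-8u$, $e_{e-2a}=3$, and $e_j=0$ otherwise. Their sum is exactly $|s|_q$, so $\mathcal E_2=\{(e_0,e_1,\dots)\}$ is a singleton.

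Feeding this into \eqref{5.20} gives
\[
C(N)=\bigl[(X_0+\cdots+X_{a-1})^{q-u}(X_1+\cdots+X_a)^{7u-4}:X_0^{14u-7}X_{e-2a-1}^{q-8u}X_{e-2a}^3\bigr].
\]
Since $X_0$ occurs only in the first factor, extracting $X_0^{14u-7}$ from there produces the scalar $\binom{q-u}{14u-7}$ and replaces that factor by $(X_1+\cdots+X_{a-1})^{q-15u+7}$. The assumption $2a+2\le e<3a$ forces $2\le e-2a\le a-1$, so the remaining target indices lie inside $\{1,\dots,a-1\}$; hence $X_a$ may be set to $0$ in the second factor, and the two remaining factors collapse to $(X_1+\cdots+X_{a-1})^{q-8u+3}$. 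The coefficient of $X_{e-2a-1}^{q-8u}X_{e-2a}^3$ there is the multinomial $\binom{q-8u+3}{3}$, and reducing modulo $p$ using $q\equiv 0\pmod p$ yields \eqref{5.31}.

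I do not expect serious obstacles: all of the genuine structural work was already done in Proposition~\ref{P5.2}, and the arithmetic parallels Proposition~\ref{P4.3} almost verbatim. The most error-prone step is the digit accounting for $S_2$ — verifying the sum of digits coincides with $|s|_q$ so that $\mathcal E_2$ is a singleton — since any miscounting there would inject spurious extra terms into $C(N)$ and break the clean product of two binomials.
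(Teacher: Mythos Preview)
Your proposal is correct and follows the paper's proof essentially step for step: verify $0<r<q$ and $s<q(q-1)\le q^{e-2a-1}(q-1)$, read off $s_0=q-u$, $s_1=7u-4$, compute $S_2=14u-7+(q-8u)q^{e-2a-1}+3q^{e-2a}$, observe the digit sum equals $|s|_q$ so $\mathcal E_2$ is a singleton, apply \eqref{5.20}, and extract the two binomial factors using $e-2a\le a-1$. The only cosmetic difference is that you split the final coefficient extraction into two explicit steps (pull out $X_0^{14u-7}$, then set $X_a=0$) where the paper does both at once.
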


\begin{proof}
First note that $0<r<q$ and $0<s<q(q-1)\le q^{e-2a-1}(q-1)$. Clearly,
\begin{equation}\label{5.32}
s_j=\begin{cases}
q-u&\text{if}\ j=0,\cr
7u-4&\text{if}\ j=1,\cr
0&\text{if}\ j>1.
\end{cases}
\end{equation}
We have 
\begin{equation}\label{5.33}
S_2=4rq^{e-2a-1}+q^{-1}(2s-r)=14u-7+(q-8u)q^{e-2a-1}+3q^{e-2a}=\sum_{j\ge 0}e_jq^j, 
\end{equation}
where
\begin{equation}\label{5.34}
e_j=\begin{cases}
14u-7&\text{if}\ j=0,\cr
q-8u&\text{if}\ j=e-2a-1,\cr
3&\text{if}\ j=e-2a,\cr
0&\text{otherwise}.
\end{cases}
\end{equation}
Since $\sum_{j\ge 0}e_j=q+6u-4=|s|_q$, we have $\mathcal E_2=\{(e_0,e_1,\dots)\}$. By \eqref{5.20},
\begin{align*}
C(N)
&=\Bigl[\prod_{j\ge 0}(X_j+X_{j+1}+\cdots+X_{j+a-1})^{s_j}:X_0^{e_0}X_1^{e_1}\cdots\Bigr]\cr
&=\Bigl[(X_0+\cdots+X_{a-1})^{q-u}(X_1+\cdots+X_{a})^{7u-4}:X_0^{14u-7}X_{e-2a-1}^{q-8u}X_{e-2a}^3\Bigr]\cr
&=\binom{q-u}{14u-7}\Bigl[(X_1+\cdots+X_{a-1})^{q-8u+3}:X_{e-2a-1}^{q-8u}X_{e-2a}^3\Bigr]\cr
&\kern21.5em \text{(since $a-1\ge e-2a$)}\cr
&=\binom{q-u}{14u-7}\binom{q-8u+3}3\equiv_p\binom{-u}{14u-7}\binom{-8u+3}3.
\end{align*}
\end{proof}

\begin{cor}\label{C5.4}
Assume that $2a+2\le e<3a$ and that $f_{a,q}$ is a PP of $\f_{q^e}$. Then $q\in\{3,5,5^2,7,7^2\}$.
\end{cor}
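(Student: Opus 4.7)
The plan is to apply Hermite's criterion in combination with Proposition~\ref{P5.3}, mirroring the argument used for Corollary~\ref{C4.4}. If $f_{a,q}$ is a PP of $\f_{q^e}$, then $C(N)=0$, and Proposition~\ref{P5.3} forces
\[
\binom{-u}{14u-7}\binom{-8u+3}{3}\equiv_p 0
\]
for every positive integer $u$ with $14u-7<q$.

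The first step is to evaluate $\binom{-u}{14u-7}\binom{-8u+3}{3}$ as an ordinary integer for small $u$ using $\binom{-n}{k}=(-1)^k\binom{n+k-1}{k}$, which produces the factorizations
\begin{align*}
u=1:&\quad 5\cdot 7,\\
u=2:&\quad 2\cdot 5\cdot 7\cdot 11\cdot 13,\\
u=3:&\quad 2\cdot 3^2\cdot 7\cdot 11\cdot 23\cdot 37,\\
u=4:&\quad 2^2\cdot 5^3\cdot 13\cdot 17\cdot 29\cdot 31.
\end{align*}

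The second step is to partition the range of possible $q$ according to the thresholds $14u-7$ and, for each interval, intersect the admissible prime sets as $u$ ranges over all indices satisfying $14u-7<q$. For $q\le 7$ no constraint is imposed, so the odd prime powers $3,5,7$ all survive. For $7<q\le 21$ only $u=1$ applies, forcing $p\in\{5,7\}$; no odd prime power in this range has such characteristic. For $21<q\le 35$ both $u=1,2$ apply but the admissible set is still $\{5,7\}$, yielding $q=5^2$. For $35<q\le 49$ intersecting with the $u=3$ prime set narrows the admissible primes to $\{7\}$, yielding $q=7^2$. For $q>49$ the $u=4$ factorization excludes $7$, and no $q$ survives. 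Assembling these, one obtains $q\in\{3,5,5^2,7,7^2\}$.

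The argument is elementary, and the principal obstacle is simply careful bookkeeping of the prime-factor intersections and the interval endpoints $14u-7$; structurally this is a direct translation of Corollary~\ref{C4.4}, with the substitutions $30u-15\mapsto 14u-7$ and $\binom{-16u+7}{7}\mapsto\binom{-8u+3}{3}$.
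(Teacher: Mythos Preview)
Your proposal is correct and essentially identical to the paper's own proof: both apply Hermite's criterion via Proposition~\ref{P5.3}, compute $\binom{-u}{14u-7}\binom{-8u+3}{3}$ for $u=1,2,3,4$ (obtaining exactly the same factorizations), and partition the range of $q$ by the thresholds $14u-7$ to isolate the surviving prime powers. The bookkeeping matches the paper's table line by line.
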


\begin{proof}
By Hermite's criterion, $C(N)=0$. By \eqref{5.31},
\[
C(N)=\begin{cases}
5\cdot 7&\text{if $u=1$ and $q>7$},\cr
2\cdot 5\cdot 7\cdot 11\cdot 13 &\text{if $u=2$ and $q>21$},\cr
2\cdot 3^2\cdot 7\cdot 11\cdot 23\cdot 37&\text{if $u=3$ and $q>35$},\cr
2^2\cdot 5^3\cdot 13\cdot 17\cdot 29\cdot 31&\text{if $u=4$ and $q>49$}.
\end{cases}
\]
The conditions $C(N)=0$ with $1\le u\le 4$ and $q>14u-7$ give the following possibilities for $p$ and $q$.
\[
\begin{tabular}{c|c|c|c}
range of $q$ & $u$ & $p$ & $q$ \\ \hline
$q\le 7$ & & & $3,5,7$\\
$7<q\le 21$ & $u=1$ & $5,7$ & none\\
$21<q\le 35$ & $1\le u\le 2$ & $5,7$ & $5^2$ \\
$35<q\le 49$ & $1\le u\le 3$ & $7$ & $7^2$ \\ 
$49<q$ & $1\le u\le 4$ & none & none \\ 
\end{tabular}
\]
Hence the claim.
\end{proof}

\subsubsection{The case $q=7^2$}

First assume that $2a+3\le e<3a$. Let $r=6$ and $s=3+25q+2q^2$. The base-$q$ digits of $s$ and $S_2$ are given below.
\[
\begin{tabular}{c|cccccc}
digit position & $0$ & $1$ & $2$ &\kern4em& &$e-2a$ \\ \hline
$s$ & $3$ & $25$ & $2$\\
$S_2$ &1& 5& & &24
\end{tabular}
\]
We find that
\begin{align*}
C(N)\,&=[(X_0+\cdots+X_{a-1})^3(X_1+\cdots+X_{a})^{25}(X_2+\cdots+X_{a+1})^2:X_0X_1^5X_{e-2a-1}^{24}]\cr
&=\binom 31\binom{27}5\not\equiv_7 0.
\end{align*}

Now assume that $e=2a+2$. Let $r=6$ and $s=3+26q+q^3$. The base-$q$ digits of $s$, $q^{e-2a}2r$ and $S_1$ are given below.
\[
\begin{tabular}{c|cccccc}
digit position &0 &1& 2&3&\kern4em& $e-a-1$ \\ \hline
$s$ & 3 & 26 &0&1\\
$q^{e-2a}2r$ &&&12\\
$S_1$ &3& 25&2 & & &12
\end{tabular}
\]
We find that
\begin{align*}
C(N)
&=[(X_0+\cdots+X_{a-1})^3(X_1+\cdots+X_{a})^{26}(X_2+\cdots+X_{a+1})^{12}(X_3+\cdots+X_{a+2})\cr
&\kern1em :X_0^3X_1^{25}X_2^2X_{a+1}^{12}]\cr
&=\binom{26}{25}\binom{12}{11}\not\equiv_7 0.
\end{align*}

\subsubsection{The case $q=7$}

Let $r=1$ and $s=4+5q$. First assume that $2a+3\le e<3a$. The base-$q$ digits of $s$ and $S_2$ are given below.
\[
\begin{tabular}{c|cccc}
digit position &0 &1& \kern4em& $e-2a-1$ \\ \hline
$s$ &  4& 5 \\
$S_2$ &4& 1& & 4
\end{tabular}
\]
We have
\[
C(N)=[(X_0+\cdots+X_{a-1})^4(X_1+\cdots+X_{a})^5:X_0^4X_1X_{e-2a-1}^4]=\binom51 \not\equiv_7 0.
\]

Now assume that $e=2a+2$. The digits of $S_2$ are modified as follows. 
\[
\begin{tabular}{c|cc}
digit position &0 &1 \\ \hline
$s$ &  4& 5 \\
$S_2$ &4& 5
\end{tabular}
\]
In this case, $C(N)=1\not\equiv_7 0$. 

\subsubsection{The case $q=5^2$}

Let $r=3$ and $s=14+q$. The base-$q$ digits of $s$ and $S_2$ are given below.
\[
\begin{tabular}{c|ccccc}
digit position &0 &1&\kern4em& & $e-2a$ \\ \hline
$s$ & 14& 1 \\
$S_2$ &3&  &&12
\end{tabular}
\]
We have
\[
C(N)=[(X_0+\cdots+X_{a-1})^{14}(X_1+\cdots+X_{a}):X_0^3X_{e-2a-1}^{12}]=\binom{14}3\not\equiv_5 0.
\]

\subsubsection{The case $q=5$}

Let $r=1$ and $s=3+q+q^{e-2a}$. The base-$q$ digits of $s$, $q^{e-2a}2r$ and $S_1$ are given below.
\[
\begin{tabular}{c|ccccccc}
digit position &0 &1&\kern4em& &$e-2a$ &\kern4em& $e-a-1$\\ \hline
$s$ & 3 &1 &&&1\\
$q^{e-2a}2r$ &&&&& 2\\
$S_1$ &3& & &1&1&&2
\end{tabular}
\]
We have
\begin{align*}
C(N)
&=[(X_0+\cdots+X_{a-1})^3(X_1+\cdots+X_{a})(X_{e-2a}+\cdots+X_{e-a-1})^3\cr
&\kern1em :X_0^3X_{e-2a-1}X_{e-2a}X_{e-a-1}^2]\cr
&=\binom 32\not\equiv_5 0.
\end{align*}

\subsubsection{The case $q=3$}

Let $r=2$ and $s=1+q^{e-2a}$. The base-$q$ digits of $s$ , $q^{e-2a}2r$ and $S_1$ are given below.
\[
\begin{tabular}{c|ccccccccc}
digit position &0 &&\kern4em& &$e-2a$& &\kern4em& $e-a-1$\\ \hline
$s$ & 1 & &&&1\\
$q^{e-2a}2r$ &&&&& 1&1\\
$S_1$ && & &1&0&1&&1&1
\end{tabular}
\]
We have
\begin{align*}
C(N)&=[(X_0+\cdots+X_{a-1})(X_{e-2a}+\cdots+X_{e-a-1})^2(X_{e-2a+1}+\cdots+X_{e-a})\cr
&\kern1em :X_{e-2a-1}X_{e-2a+1}X_{e-a-1}X_{e-a}]\cr
&=2\not\equiv_3 0.
\end{align*}

%%%%%%%%%%%%%%%%%%%%%%%%%%%%%%%%%%%%%%%%%%%
\subsection{The case $e=2a+1$}

\subsubsection{The case $q>4$}
 
Let $r=q-2$ and $s=(q-1)+3q$. The base-$q$ digits of $s$, $q^{e-2a}2r$ and $S_1$ are given below.
\[
\begin{tabular}{c|cccccc}
digit position &0 &1&&\kern4em& $e-a-1$\\ \hline
$s$ & $q-1$ &3 \\
$q^{e-2a}2r$ &&$q-4$& 1\\
$S_1$ &$q-1$&3 & &&$q-4$&1
\end{tabular}
\]

\noindent
We have
\begin{align*}
C(N)\,&=[(X_0+\cdots+X_{a-1})^{q-1}(X_1+\cdots+X_{a})^{q-1}(X_2+\cdots+X_{a+1})\cr
&\kern1em :X_0^{q-1}X_1^3X_{a}^{q-4}X_{a+1}]\cr
&=\binom{q-1}3\equiv_p -1\not\equiv_p 0.
\end{align*}

\subsubsection{The case $q=3$}

Let $r=2$ and $s=1+q$. First assume that $a\ge 3$. The base-$q$ digits of $s$, $q^{e-2a}2r$ and $S_1$ are given below.
\[
\begin{tabular}{c|cccccc}
digit position &0 &1&2&\kern4em& $e-a-1$\\ \hline
$s$ & 1&1 \\
$q^{e-2a}2r$ &&1&1\\
$S_1$ &1&0 & 1&&1&1
\end{tabular}
\]
Let
\[
F=(X_0+\cdots+X_{a-1})(X_1+\cdots+X_{a})^2(X_2+\cdots+X_{a+1}).
\]
Then $C(N)=[F:X_0X_2X_aX_{a+1}]=2\not\equiv_3 0$.

If $a=2$, the digits of $S_1$ are modified as follows.
\[
\begin{tabular}{c|cccc}
digit position &0 &$1$&$2$\\ \hline
$s$ & 1&1 \\
$q^{e-2a}2r$ &&1&1\\
$S_1$ &1&0&2 &1
\end{tabular}
\]
In this case, $C(N)=[F:X_0X_2^2X_3]=1\not\equiv_3 0$.

%%%%%%%%%%%%%%%%%%%%%%%%%%%%%%%%%%%%%%%%%%%
%  section 6
%%%%%%%%%%%%%%%%%%%%%%%%%%%%%%%%%%%%%%%%%%%
\section{The case $e/2<a< e$}

The method of the previous two sections works for the case $a+2\le e<2a$, but a slightly different method works better for the case $e=a+1$.

\subsection{The case $a+2\le e<2a$}
 
We will use the following notations. Let $0<r<q$ and $s>0$ be integers such that $s\equiv -r\pmod{q-1}$ and $s\equiv r/2\pmod q$, i.e., $s\equiv r((q^2+1)/2-2q)\pmod{q(q-1)}$. Let 
\begin{equation}\label{6.2}
N=rq^{e-a}+s.
\end{equation}
Write
\begin{equation}\label{6.15}
s=\sum_{j\ge 0}s_jq^j,
\end{equation}
where $s_j\in\{0,\dots,q-1\}$. Let
\begin{equation}\label{6.15.1}
S_2=2rq^{e-a-1}+q^{-1}(2s-r)
\end{equation}
and
\begin{equation}\label{6.17}
\mathcal E_2=\Bigl\{(\epsilon_0,\epsilon_1,\dots):\epsilon_j\ge 0,\ \sum_{j\ge 0}\epsilon_j=|s|_q,\ \sum_{j\ge 0}\epsilon_jq^j=S_2 \Bigr\}.
\end{equation}

\begin{lem}\label{L6.1}
If $s\le q^{e-a-1}(q-1)$, then $0<N<q^e-1$ and
\begin{equation}\label{6.4}
C(N)=\sum_{\alpha_{2i}}\binom{s}{\alpha_{21},\dots,\alpha_{2a}},
\end{equation}
where the sum is over all nonnegative integers $\alpha_{2i}$, $1\le i\le a$, subject to the conditions
\begin{gather}\label{6.5}
\sum_{i=1}^{a}\alpha_{2i}=s,\\
\label{6.6.1}
\sum_{i=1}^{a}\alpha_{2i}q^{i-1}=S_2.
\end{gather}
\end{lem}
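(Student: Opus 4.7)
The plan is to follow the template of Lemmas~\ref{L4.1} and \ref{L5.1}; the argument is appreciably simpler here because $N$ has only two summands. First I would check the elementary fact $0<N<q^e-1$, which is immediate from $r<q$ and $s\le q^{e-a-1}(q-1)$.

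Next, expand $f_{a,q}^N$ by the multinomial theorem:
\[
f_{a,q}^N = X^{-2rq^{e-a}-2s}\Bigl(\sum_{i=1}^{a} X^{q^{e-a+i}}\Bigr)^{r}\Bigl(\sum_{i=1}^{a} X^{q^i}\Bigr)^{s},
\]
which, after reduction modulo $X^{q^e}-X$, gives
\[
f_{a,q}^N \equiv \sum_{\alpha_{ki}}\binom{r}{\alpha_{11},\dots,\alpha_{1a}}\binom{s}{\alpha_{21},\dots,\alpha_{2a}}X^{E^*}\pmod{X^{q^e}-X},
\]
where
\[
E = -2rq^{e-a}-2s+\alpha_{1a}+\sum_{i=1}^{a-1}\alpha_{1i}q^{e-a+i}+\sum_{i=1}^{a}\alpha_{2i}q^i
\]
and $\sum_i\alpha_{1i}=r$, $\sum_i\alpha_{2i}=s$. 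The coefficient of $X^{q^e-1}$ is then the sum of these multinomials over those tuples $(\alpha_{ki})$ for which $E^*=q^e-1$.

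The crucial step is to show that $E^*=q^e-1$ forces $E=0$. Following Lemma~\ref{L5.1}, I would first place all the mass of each inner sum on the variable with the smallest power of $q$, obtaining the lower bound $E\ge -2rq^{e-a}+s(q-2)+r > -(q^e-1)$ (using $r\le q^a/2$ and $s\le q^{e-a-1}(q-1)$). Writing $E=k(q^e-1)$ with $k\ge 0$ and reducing modulo $q$ yields $-k\equiv -r+\alpha_{1a}\pmod q$, so $\alpha_{1a}\ge r-k$ and $\sum_{i<a}\alpha_{1i}\le k$. Plugging the corresponding maximum contribution $kq^{e-1}$ (achieved by placing all the leftover mass on $\alpha_{1,a-1}$) into a matching upper estimate for $E$ produces $k<sq^a/(q^e-q^{e-1})\le 1$, forcing $k=0$. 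I expect this bounding calculation to be the only delicate step, but it is essentially a verbatim transcription of the argument in Lemma~\ref{L5.1}.

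Once $E=0$ is established, reducing modulo $q$ gives $\alpha_{1a}\equiv 2s\equiv r\pmod q$; since $0\le \alpha_{1a}\le r<q$, we conclude $\alpha_{1a}=r$ and $\alpha_{1i}=0$ for $1\le i\le a-1$, so the multinomial $\binom{r}{0,\dots,0,r}=1$ drops out. The surviving identity $E=0$ then reads $\sum_{i=1}^{a}\alpha_{2i}q^i = 2rq^{e-a}+2s-r$; both sides are divisible by $q$ (thanks to $2s\equiv r\pmod q$), and division by $q$ yields \eqref{6.6.1}. Collecting the remaining multinomials $\binom{s}{\alpha_{21},\dots,\alpha_{2a}}$ over the admissible tuples produces \eqref{6.4}.
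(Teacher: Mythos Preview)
Your proposal is correct and follows exactly the route the paper takes: the paper's own proof of Lemma~\ref{L6.1} is simply ``Identical to the proof of Lemma~\ref{L5.1}'', and what you have written is precisely the specialization of that argument to the two-term $N=rq^{e-a}+s$. The bounding step $k<sq^a/(q^e-q^{e-1})\le 1$ and the subsequent deduction $\alpha_{1a}=r$ are carried out just as in Lemma~\ref{L5.1}.
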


\begin{proof}
Identical to the proof of Lemma~\ref{L5.1}.
\end{proof}

\begin{prop}\label{P6.2}
If $s\le q^{e-a-1}(q-1)$, then
\begin{equation}\label{6.20}
C(N)=\sum_{(\epsilon_0,\epsilon_1,\dots)\in\mathcal E_2}\Bigl[\prod_{j\ge 0}(X_j+X_{j+1}+\cdots+X_{j+a-1})^{s_j}:X_0^{\epsilon_0}X_1^{\epsilon_1}\cdots\Bigr].
\end{equation}
\end{prop}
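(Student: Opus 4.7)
The plan is to imitate the proof of Proposition 4.2, with the simplification that only the single family $\alpha_{2i}$ from Lemma 6.1 now appears (there is no analogue of $\alpha_{3i}$). Concretely, I would start from formula (6.4) and expand each $\alpha_{2i}$ in base $q$ as $\alpha_{2i}=\sum_{j\ge 0}\alpha_{2i}^{(j)}q^j$ with digits $\alpha_{2i}^{(j)}\in\{0,\ldots,q-1\}$. By Dickson's congruence recalled in Section 2, $\binom{s}{\alpha_{21},\ldots,\alpha_{2a}}\equiv_p 0$ unless $\sum_{i=1}^{a}\alpha_{2i}^{(j)}=s_j$ for every $j\ge 0$, and under this constraint it factors as $\prod_{j\ge 0}\binom{s_j}{\alpha_{21}^{(j)},\ldots,\alpha_{2a}^{(j)}}$ modulo $p$.

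Next I would translate the exponent equation (6.6.1) into a condition on digits. Substituting the base-$q$ expansions gives $\sum_{i=1}^{a}\alpha_{2i}q^{i-1}=\sum_{k\ge 0}d_k q^k$ where $d_k=\sum_{i+j=k+1}\alpha_{2i}^{(j)}$. Once the digit constraints are in force, $\sum_{k\ge 0}d_k=\sum_{j\ge 0}s_j=|s|_q$, so (6.6.1) is equivalent to $(d_0,d_1,\ldots)\in\mathcal E_2$. I would then partition the sum in (6.4) according to the tuple $(d_0,d_1,\ldots)\in\mathcal E_2$, so that
\[
C(N)=\sum_{(d_0,d_1,\ldots)\in\mathcal E_2}\;\sum_{\alpha_{2i}^{(j)}}\prod_{j\ge 0}\binom{s_j}{\alpha_{21}^{(j)},\ldots,\alpha_{2a}^{(j)}},
\]
where the inner sum is over all nonnegative integer digits satisfying $\sum_{i=1}^{a}\alpha_{2i}^{(j)}=s_j$ for all $j$ and $\sum_{i+j=k+1}\alpha_{2i}^{(j)}=d_k$ for all $k$.

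Finally, I would apply Lemma 2.1 with $m=a-1$ to the inner sum, which exactly matches its hypotheses and yields
\[
\sum_{\alpha_{2i}^{(j)}}\prod_{j\ge 0}\binom{s_j}{\alpha_{21}^{(j)},\ldots,\alpha_{2a}^{(j)}}=\Bigl[\prod_{j\ge 0}(X_j+X_{j+1}+\cdots+X_{j+a-1})^{s_j}:X_0^{d_0}X_1^{d_1}\cdots\Bigr].
\]
Summing over $(d_0,d_1,\ldots)\in\mathcal E_2$ then produces (6.20). There is no genuine obstacle: the argument is a strict specialization of Proposition 4.2's proof, and the only bookkeeping point is to verify that $m=a-1$ makes the variables in each Lemma 2.1 factor $(X_j+X_{j+1}+\cdots+X_{j+a-1})^{s_j}$ agree with those appearing on the right of (6.20), which is immediate.
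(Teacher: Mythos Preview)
Your proposal is correct and follows exactly the approach the paper intends: the paper's own proof reads ``Identical to the proof of Proposition~4.2,'' and what you have written is precisely that specialization, with only the single family $\alpha_{2i}$ playing the role that $\alpha_{3i},\alpha_{4i}$ jointly played before.
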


\begin{proof}
Identical to the proof of Proposition~\ref{P4.2}.
\end{proof}

\begin{prop}\label{P6.3}
Assume that $a+2\le e<2a$. Let $u$ be a positive integer such that $6u-3<q$. Let
$r=q-2u$ and $s=q-u+(3u-2)q$.
Then
\begin{equation}\label{6.31}
C(N)=(-4u+1)\binom{-u}{6u-3}.
\end{equation}
\end{prop}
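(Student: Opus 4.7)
The plan is to mimic Propositions~\ref{P4.3} and \ref{P5.3} and apply the coefficient-extraction formula \eqref{6.20}. First I would check the hypotheses of Proposition~\ref{P6.2}: $0<r<q$ holds because $2u<6u-3<q$, and $s\le q^{e-a-1}(q-1)$ holds because $e-a-1\ge 1$ and $q(q-1)-s=q(q-3u)+u>0$ (using $q>6u-3\ge 3u$ for $u\ge 1$). The base-$q$ digits of $s$ read off directly from the given form: $s_0=q-u$, $s_1=3u-2$, $s_j=0$ for $j\ge 2$, so $|s|_q=q+2u-2$.

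Next I would compute $S_2$. A short calculation gives $2s-r=(6u-3)q$, hence
\[
S_2=2rq^{e-a-1}+(6u-3)=q^{e-a}+(q-4u)q^{e-a-1}+(6u-3),
\]
where I used $2r=q+(q-4u)$ with $q-4u\in[0,q-1]$; this is exactly where the hypothesis $6u-3<q$ together with $u\ge 1$ is consumed, since it forces $q\ge 6u-2\ge 4u$. The three nonzero digits $6u-3$, $q-4u$, and $1$ all lie in $[0,q-1]$ and sum to $|s|_q$; therefore $\mathcal E_2$ is the singleton consisting of this digit sequence.

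By \eqref{6.20}, $C(N)$ is then the single coefficient
\[
\bigl[(X_0+\cdots+X_{a-1})^{q-u}(X_1+\cdots+X_a)^{3u-2}:X_0^{6u-3}X_{e-a-1}^{q-4u}X_{e-a}\bigr].
\]
Since $X_0$ occurs only in the first sum, I would peel off the multinomial factor $\binom{q-u}{6u-3}$ for selecting $X_0$ exactly $6u-3$ times there, leaving $(X_1+\cdots+X_{a-1})^{q-7u+3}(X_1+\cdots+X_a)^{3u-2}$. The inequalities $a+2\le e<2a$ force $1\le e-a-1<e-a\le a-1$, so the remaining target variables both lie in $\{X_1,\dots,X_{a-1}\}$ and, in particular, $X_a$ does not appear in the target. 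Hence the $X_a$ summand of the second factor contributes nothing, and one may replace $(X_1+\cdots+X_a)^{3u-2}$ by $(X_1+\cdots+X_{a-1})^{3u-2}$; combining factors gives $(X_1+\cdots+X_{a-1})^{q-4u+1}$, whose coefficient at $X_{e-a-1}^{q-4u}X_{e-a}$ is the multinomial $\binom{q-4u+1}{q-4u,1}=q-4u+1$. Putting these together yields $C(N)=\binom{q-u}{6u-3}(q-4u+1)\equiv_p(-4u+1)\binom{-u}{6u-3}$, as claimed.

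The only non-routine point is verifying that the three candidate digits of $S_2$ genuinely lie in $[0,q-1]$ so that $\mathcal E_2$ is a singleton rather than a family of borrow-variants; this is precisely the role of the numerical hypothesis $6u-3<q$, after which the coefficient extraction is a clean two-step reduction.
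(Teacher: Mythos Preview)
Your argument is correct and follows the paper's proof essentially line for line: verify $s\le q^{e-a-1}(q-1)$, read off the base-$q$ digits of $s$ and of $S_2$, observe that the digit sum of $S_2$ equals $|s|_q$ so that $\mathcal E_2$ is a singleton, and then extract the coefficient via \eqref{6.20}. Your extra justification that $e-a\le a-1$ (so $X_a$ may be dropped from the second factor before combining exponents) is exactly the ``since $a-1\ge e-a$'' step that the paper leaves implicit in the analogous Propositions~\ref{P4.3} and \ref{P5.3}.
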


\begin{proof}
First note that $0<r<q$ and $0<s<q(q-1)\le q^{e-a-1}(q-1)$. Clearly,
\begin{equation}\label{6.32}
s_j=\begin{cases}
q-u&\text{if}\ j=0,\cr
3u-2&\text{if}\ j=1,\cr
0&\text{if}\ j>1.
\end{cases}
\end{equation}
We have 
\begin{equation}\label{6.33}
S_2=2rq^{e-a-1}+q^{-1}(2s-r)=6u-3+(q-4u)q^{e-a-1}+q^{e-a}=\sum_{j\ge 0}e_jq^j,
\end{equation}
where
\begin{equation}\label{6.34}
e_j=\begin{cases}
6u-3&\text{if}\ j=0,\cr
q-4u&\text{if}\ j=e-a-1,\cr
1&\text{if}\ j=e-a,\cr
0&\text{otherwise}.
\end{cases}
\end{equation}
Since $\sum_{j\ge 0}e_j=q+2u-2=|s|_q$, we have $\mathcal E_2=\{(e_0,e_1,\dots)\}$. By \eqref{6.20},
\begin{align*}
C(N)
&=[(X_0+\cdots+X_{a-1})^{q-u}(X_1+\cdots+X_{a})^{3u-2}:X_0^{6u-3}X_{e-a-1}^{q-4u}X_{e-a}]\cr
&=\binom{q-u}{6u-3}\binom{q-4u+1}1\equiv_p(-4u+1)\binom{-u}{6u-3}.
\end{align*}
\end{proof}

\begin{cor}\label{C6.4}
Assume that $a+2\le e<2a$ and that $f_{a,q}$ is a PP of $\f_{q^e}$. Then $q\in\{3,3^2\}$.
\end{cor}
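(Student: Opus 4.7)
The plan is to follow the template used in Corollaries~\ref{C4.4} and~\ref{C5.4}. If $f_{a,q}$ is a PP of $\f_{q^e}$, then Hermite's criterion forces $C(N)=0$ in $\f_q$, and Proposition~\ref{P6.3} then says that $p$ must divide the integer
\[
(-4u+1)\binom{-u}{6u-3}
\]
for every positive integer $u$ with $6u-3<q$. The strategy is to evaluate this quantity for the first few admissible values of $u$, then tabulate which odd prime powers $q$ can simultaneously satisfy the resulting divisibility conditions.

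First I would compute the two smallest admissible cases. Using $\binom{-u}{k}=(-1)^k\binom{u+k-1}{k}$, one obtains
\[
(-3)\binom{-1}{3}=3,\qquad (-7)\binom{-2}{9}=2\cdot 5\cdot 7=70.
\]
Thus the $u=1$ constraint demands $p\mid 3$ and the $u=2$ constraint demands $p$ to be an odd prime divisor of $70$, i.e., $p\in\{5,7\}$.

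Next I would split into three ranges for $q$. For $q\le 3$, no positive integer $u$ satisfies $6u-3<q$, so Proposition~\ref{P6.3} gives no constraint and only $q=3$ survives. For $3<q\le 9$ only $u=1$ is admissible, forcing $p=3$; the unique odd prime power of $3$ in this range is $q=3^2$. For $q>9$ both $u=1$ and $u=2$ apply, so $p$ must divide $\gcd(3,70)=1$, which is impossible. Combining these three cases yields $q\in\{3,3^2\}$, as claimed. The argument is pure bookkeeping given Proposition~\ref{P6.3}; the only point requiring a little care is to match the strict inequality $6u-3<q$ to the endpoints of the case table (in particular, that $u=2$ is \emph{not} admissible when $q=9$).
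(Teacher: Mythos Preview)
Your proof is correct and follows essentially the same approach as the paper: evaluate $(-4u+1)\binom{-u}{6u-3}$ at $u=1,2$ to get $3$ and $2\cdot 5\cdot 7$, then tabulate the ranges $q\le 3$, $3<q\le 9$, $q>9$ exactly as the paper does. Your boundary check that $u=2$ is inadmissible at $q=9$ is also the same care the paper takes implicitly in its table.
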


\begin{proof}
By Hermite's criterion, $C(N)=0$. By \eqref{6.31},
\[
C(N)=\begin{cases}
3&\text{if $u=1$ and $q>3$},\cr
2\cdot 5\cdot 7&\text{if $u=2$ and $q>9$}.
\end{cases}
\]
The conditions $C(N)=0$ with $1\le u\le 2$ and $q>6u-3$ give the following possibilities for $p$ and $q$.
\[
\begin{tabular}{c|c|c|c}
range of $q$ & $u$ & $p$ & $q$ \\ \hline
$q\le 3$ & & & $3$\\
$3<q\le 9$ & $u=1$ & $3$ & $3^2$\\
$9<q$ & $1\le u\le 2$ & none & none \\
\end{tabular}
\]
Hence the claim.
\end{proof}

\subsubsection{The case $q=3^2$}

Let $r=1$ and $s=5+2q$. The base-$q$ digits of $s$ and $S_2$ are given below.
\[
\begin{tabular}{c|ccccc}
digit position &0 &1&\kern4em& &$e-a$\\ \hline
$s$ & 5&2  \\
$S_2$ &5&&&2
\end{tabular}
\]
We have 
\[
C(N)=[(X_0+\cdots+X_{a-1})^5(X_1+\cdots+X_{a})^2:X_0^5X_{e-a-1}^2]=1\not\equiv_30.
\]

\subsubsection{The case $q=3$}

Let $r=1$ and $s=2+2q+\cdots+2q^{e-a-2}+q^{e-a-1}$. The base-$q$ digits of $s$ and $S_2$ are given below.
\[
\begin{tabular}{c|ccccccc}
digit position &0 & $\cdots$ &&&$e-a-1$&\\ \hline
$s$ & 2&$\cdots$&2&2&1  \\
$S_2$ &2&$\cdots$&2&0&0&1
\end{tabular}
\]

\noindent
We have $\mathcal E_2=\{(\epsilon_{i0},\epsilon_{i1},\dots):1\le i\le \max\{1,e-a-2\}\}$, where
\[
\begin{array}{rccccccl}
&\scriptstyle 0&&&& &\scriptstyle e-a \cr   
(\epsilon_{10},\epsilon_{11},\dots)=(\kern-0.5em&2&\cdots&2&0&3&0&\kern-0.5em),
\end{array}
\]
and for $2\le i\le e-a-2$,
\[
\begin{array}{rcccccccccccl}
&\scriptstyle 0&&&\scriptstyle i-2& &&&&&&\scriptstyle e-a \cr   
(\epsilon_{i0},\epsilon_{i1},\dots)=(\kern-0.5em&2&\cdots&2&5&1&2&\cdots&2&0&0&1&\kern-0.5em),
\end{array}.
\]
Let 
\[
F=\Bigl(\prod_{i=0}^{e-a-2}(X_i+\cdots+X_{i+a-1})^2\Bigr)(X_{e-a-1}+\cdots+X_{e-2}).
\]
Then it is easy to see that
\[
[F:X_0^{\epsilon_{i0}}X_1^{\epsilon_{i1}}\cdots]=
\begin{cases}
1&\text{if}\ i=1,\cr
0&\text{if}\ 2\le i\le e-a-2.
\end{cases}
\]
Therefore $C(N)=\sum_{i=1}^{\max\{1,e-a-2\}}[F:X_0^{\epsilon_{i0}}X_1^{\epsilon_{i1}}\cdots]=1\not\equiv_3 0$.

%%%%%%%%%%%%%%%%%%%%%%%%%%%%%%%%%%%%%%%%%%%
\subsection{The case $e=a+1$}

\begin{lem}\label{L6.5}
Let $e\ge 3$ the nonnegative integer solutions $(c_1,\dots,c_{e-1})$ of the system
\begin{align}\label{e6.18}
&\sum_{j=1}^{e-1}c_j(q^j-2)\equiv 0\pmod{q^e-1},\\
\label{e6.19}
&\sum_{j=1}^{e-1}c_j=q^2-1
\end{align}
are as follows:
\begin{itemize}
\item[(i)] If $e\ge 5$,
\begin{equation}\label{e6.20}
\begin{array}{crcccccccl}   
(c_1,\dots,c_{e-1})&=(\kern-0.5em&q-1&1&0&\cdots&0&q&q^2-2q-1&\kern-0.5em)\ \text{or}\vspace{0.4em}\cr
&(\kern-0.5em&2q-1&0&0&\cdots&0&0&q^2-2q&\kern-0.5em).
\end{array}
\end{equation}

\item[(ii)] If $e=4$,
\begin{equation}\label{e6.21}
\begin{array}{crcccl}   
(c_1,c_2,c_3)&=(\kern-0.5em&q-1&q+1&q^2-2q-1&\kern-0.5em)\ \text{or}\vspace{0.4em}\cr
&(\kern-0.5em&2q-1&0&q^2-2q&\kern-0.5em).
\end{array}
\end{equation}  

\item[(iii)] If $e=3$,
\begin{equation}\label{e6.22}
(c_1,c_2)=(2q-1,q^2-2q).
\end{equation}
\end{itemize}
\end{lem}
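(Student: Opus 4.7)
The plan is to reduce the system \eqref{e6.18}--\eqref{e6.19} to a counting problem about nonnegative representations of a fixed integer in base $q$, and then dispatch it by a short case analysis.

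Set $M=\sum_{j=1}^{e-1}c_jq^j$. Combined with \eqref{e6.19}, the congruence \eqref{e6.18} reads $M\equiv 2(q^2-1)\pmod{q^e-1}$, so $M=k(q^e-1)+2(q^2-1)$ for some integer $k\ge 0$. The trivial bounds $(q^2-1)q\le M\le (q^2-1)q^{e-1}$ force $1\le k\le q-1$, and since $q\mid M$ (as $j\ge 1$ throughout) while $M\equiv -k-2\pmod q$, one obtains $k=q-2$. Hence $M/q=(q-2)q^{e-1}+2q-1$, whose base-$q$ expansion has exactly three nonzero digits, namely $d_0=q-1$, $d_1=1$, and $d_{e-1}=q-2$.

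Setting $b_i=c_{i+1}$ for $0\le i\le e-2$, the problem becomes the counting of nonnegative solutions of $\sum_{i=0}^{e-2}b_iq^i=M/q$ subject to $\sum_i b_i=q^2-1$. Every such representation is obtained from the base-$q$ one by a sequence of uncarries, each of which subtracts $1$ from $b_i$ and adds $q$ to $b_{i-1}$. Letting $v_i\ge 0$ be the net number of uncarries at position $i\ge 1$, we have $b_i=d_i+qv_{i+1}-v_i$ with $v_0=0$. Zeroing out the phantom digit at position $e-1$ forces $v_{e-1}=q-2$, while equating digit sums gives $(q-1)\sum_{i\ge 1}v_i=(q^2-1)-(2q-2)=(q-1)^2$, so $\sum_{i=1}^{e-2}v_i=1$. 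Hence exactly one $v_{i_0}$ with $1\le i_0\le e-2$ equals $1$.

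For such $i_0$, nonnegativity of $b_{i_0}=d_{i_0}+qv_{i_0+1}-1$ is the only remaining constraint. It is satisfied when $i_0=1$ (using $d_1=1$) and when $i_0=e-2$ (using $v_{i_0+1}=v_{e-1}=q-2$); it fails for $2\le i_0\le e-3$ (possible only if $e\ge 5$) since then $d_{i_0}=v_{i_0+1}=0$ forces $b_{i_0}=-1$. Plugging in $i_0=1$ gives $(b_0,b_{e-2})=(2q-1,\,q^2-2q)$ with other $b_i=0$, which is the second solution in \eqref{e6.20}/\eqref{e6.21} or the unique solution \eqref{e6.22} when $e=3$. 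Plugging in $i_0=e-2$ gives $(b_0,b_{e-3},b_{e-2})=(q-1,\,q,\,q^2-2q-1)$ with $b_1=1$ for $e\ge 5$; for $e=4$ the positions $1$ and $e-3$ collide, so the contribution to $b_1$ becomes $q+1$. The two choices $i_0=1$ and $i_0=e-2$ coincide exactly when $e=3$, producing the classification (i)--(iii).

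The principal obstacle is the careful case distinction among $e=3$, $e=4$, and $e\ge 5$, where the five positions $0$, $1$, $e-3$, $e-2$, $e-1$ (that appear in the uncarry analysis) may coincide and one has to merge the corresponding contributions to the $b_i$'s; no further idea is required beyond the reduction $k=q-2$ and the uncarry bookkeeping.
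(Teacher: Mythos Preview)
Your proof is correct and follows essentially the same approach as the paper: both reduce to $k=q-2$ via a size bound together with the congruence modulo $q$, and then analyze the resulting equation by base-$q$ borrowing/uncarrying. The paper first bounds $c_{e-1}\ge q^2-2q-2$ and rewrites the equation as $(c_1+1)q+\cdots+(c_{e-1}-(q^2-2q-2))q^{e-1}=2q^2+2q^{e-1}$ before invoking two borrows, whereas you parametrize all representations at once by the uncarry vector $(v_i)$; the two arguments are equivalent, with your version slightly more systematic and the paper's slightly more concrete.
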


\begin{proof}
Let $(c_1,\dots,c_{e-1})$ be a solution of \eqref{e6.18} and \eqref{e6.19}. Let
\begin{equation}\label{e6.23}
\sum_{j=1}^{e-1}c_j(q^j-2)=k(q^e-1).
\end{equation}
Then $k\equiv 2\sum_{j=1}^{e-1}c_j=2(q^2-1)\equiv -2\pmod q$. Since
\[
k(q^e-1)=\sum_{j=1}^{e-1}c_j(q^j-2)\le(q^2-1)(q^{e-1}-2)=q^{e+1}-2q^2-q^{e-1}+2<q^{e+1}-q,
\]
we have $k<q$. Hence $k=q-2$. Now \eqref{e6.23} becomes
\begin{equation}\label{e6.24}
\sum_{j=1}^{e-1}c_jq^j=q^{e+1}-2q^e+2q^2-q.
\end{equation}
Thus
\[
(q^2-1-c_{e-1})q^{e-2}+c_{e-1}q^{e-1}\ge q^{e+1}-2q^e+2q^2-q,
\]
which gives
\[
q^2-1-c_{e-1}+c_{e-1}q\ge q^3-2q^2+1.
\]
Hence
\[
(q-1)c_{e-1}\ge q^3-3q^2+2=(q-1)(q^2-2q-2),
\]
which gives $c_{e-1}\ge q^2-2q-2$. Rewrite \eqref{e6.24} as
\begin{equation}\label{6.25}
(c_1+1)q+c_2q^2+\cdots+c_{e-2}q^{e-2}+(c_{e-1}-(q^2-2q-2))q^{e-1}=2q^2+2q^{e-1}.
\end{equation}
Note that the sum of the coefficients of the left side of \eqref{6.25} is $2q+2$ and $c_1+1\ge 1$; hence these coefficients are obtained by borrowing twice from the base-$q$ digits of $2q^2+2q^{e-1}$, at least once from $2q^2$. Therefore
\begin{align*}
&(c_1+1,c_2,\dots,c_{e-2},c_{e-1}-(q^2-2q-2))\cr
&=
\begin{cases}
(q,1,0,\dots,0,q,1)\ \text{or}\ (2q,0,0,\dots,0,0,2)&\text{if}\ e\ge 5,\cr
(q,q+1,1)\ \text{or}\ (2q,0,2)&\text{if}\ e=4,\cr
(2q,2)&\text{if}\ e=3.
\end{cases}
\end{align*}
Hence the conclusion.
\end{proof}

Now let $N=q^2-1$, and note that $0<N<q^e-1$. We have
\[
C(N)=\sum_{c_1,\dots,c_{e-1}}\binom{q^2-1}{c_1,\dots,c_{e-1}},
\]
where $(c_1,\dots,c_{e-1})$ satisfies \eqref{e6.18} and \eqref{e6.19}. By Lemma~\ref{L6.5}, when $e\ge 5$,
\[
C(N)=\binom{q^2-1}{q-1,1,q,q^2-2q-1}+\binom{q^2-1}{2q-1,q^2-2q}\equiv_p -1\not\equiv_p 0;
\]
when $e=4$,
\[
C(N)=\binom{q^2-1}{q-1,q+1,q^2-2q-1}+\binom{q^2-1}{2q-1,q^2-2q}\equiv_p -1\not\equiv_p 0;
\]
when $e=3$,
\[
C(N)=\binom{q^2-1}{2q-1,q^2-2q}\equiv_p -1\not\equiv_p 0.
\]

%%%%%%%%%%%%%%%%%%%%%%%%%%%%%%%%%%%%%%%%%%%
%  section 7
%%%%%%%%%%%%%%%%%%%%%%%%%%%%%%%%%%%%%%%%%%%
\section{The case $e<a<(p-1)e$}

Write $a=ue+v$, where $1\le u\le p-2$ and $1\le v\le e$. Then
\begin{align}\label{7.1}
f_{a,q}(X)\,&\equiv uX^{q^e-2}+\cdots+uX^{q^{v+1}-2}+(u+1)X^{q^{v}-2}+\cdots+(u+1)X^{q-2}\\
&\pmod{X^{q^e}-X}.\nonumber
\end{align}

\begin{lem}\label{L7.1}
Let $e\ge 2$. The only nonnegative integer solution of the system
\begin{align}\label{7.2}
&\sum_{j=1}^ec_j(q^j-2)\equiv 0\pmod{q^e-1},\\
\label{7.3}
&\sum_{j=1}^ec_j=q-1
\end{align} 
is $(c_1,\dots,c_e)=(1,0,\dots,0,q-2)$. 
\end{lem}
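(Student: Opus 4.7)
The plan is to convert the congruence into an honest equation over $\mathbb{Z}$ and then solve it by base-$q$ bookkeeping, following the same template as Lemma~\ref{L6.5}.

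First, using \eqref{7.3}, I rewrite \eqref{7.2} as
\[
\sum_{j=1}^{e} c_j q^j - 2(q-1) = k(q^e - 1)
\]
for some integer $k \ge 0$. Reducing both sides modulo $q$, the left side vanishes and the right side is $-2-k$, so $k \equiv -2 \pmod{q}$. For the upper bound, $\sum c_j q^j \le q^e \sum c_j = (q-1)q^e$, which gives $k(q^e-1) \le (q-1)(q^e-2)$, and hence $k < q-1$, i.e.\ $k \le q-2$. Together with $k \equiv -2 \pmod q$, this forces $k = q-2$, and the equation becomes
\[
\sum_{j=1}^{e} c_j q^j \;=\; q + (q-2)\,q^e.
\]

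Next, I extract $(c_1,\dots,c_e)$ from this equation combined with $\sum c_j = q-1$. Since $c_e q^e \le q + (q-2)q^e$ and $e \ge 2$, one has $c_e \le q-2$. If $c_e = q-2$, the remaining coordinates satisfy $\sum_{j=1}^{e-1} c_j = 1$ and $\sum_{j=1}^{e-1} c_j q^j = q$; the unique nonzero entry $c_{j_0}$ must then satisfy $c_{j_0} q^{j_0} = q$ with $c_{j_0} = 1$, forcing $j_0 = 1$. This produces the advertised solution $(1,0,\dots,0,q-2)$. If instead $c_e \le q-3$, set $t := q-2-c_e \ge 1$; the remaining $c_j$ must satisfy $\sum_{j=1}^{e-1} c_j = t+1$ and $\sum_{j=1}^{e-1} c_j q^j = t q^e + q$, but the latter sum is bounded above by $(t+1)q^{e-1}$, whereas $(t+1)q^{e-1} < t q^e + q$ whenever $t \ge 1$ and $q \ge 3$ — a contradiction.

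I do not anticipate a genuine obstacle here: the whole argument is a one-parameter version of Lemma~\ref{L6.5}, with the determination of $k$ via the mod-$q$ reduction plus the crude upper bound being the only slightly delicate step. Odd $q$ is used tacitly (via $q \ge 3$) when ruling out $c_e \le q-3$.
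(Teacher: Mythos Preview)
Your argument is correct and essentially identical to the paper's: both pin down $k=q-2$ via the mod-$q$ reduction together with the size bound, then extract $(c_1,\dots,c_e)$ from $\sum_{j} c_j q^j=q+(q-2)q^e$ (the paper by a coefficient-sum/borrow observation, you by a direct case split on $c_e$). One cosmetic slip: in your displayed equation the left side is $\equiv 2\pmod q$, not $0$, and the right side is $\equiv -k$, not $-2-k$ --- but the conclusion $k\equiv -2$ is of course unaffected.
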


\begin{proof}
Let $(c_1,\dots,c_e)$ satisfy \eqref{7.2} and \eqref{7.3}. Let
\begin{equation}\label{7.5}
\sum_{j=1}^ec_j(q^j-2)=k(q^e-1).
\end{equation}
Then $k\equiv-2\pmod q$. Since
\[
k(q^e-1)=\sum_{j=1}^ec_j(q^j-2)\le (q-1)(q^e-2)<q(q^e-1),
\]
we have $k<q$, and hence $k=q-2$. Now \eqref{7.5} becomes
\begin{equation}\label{7.6}
c_1q+\cdots+c_{e-1}q^{e-1}+(c_e+2)q^e=q+q^{e+1}.
\end{equation}
Note that the sum of the coefficients of the left side of \eqref{7.6} is $q+1$. Thus we must have $(c_1,\dots,c_{e-1},c_e+2)=(1,0,\dots,0,q)$. Hence the conclusion.
\end{proof}

By \eqref{7.1} and Lemma~\ref{L7.1}, we have 
\[
C(q-1)=\binom{q-1}1(u+1)u^{q-2}\equiv_p-(u+1)u^{q-2}\not\equiv_p 0.
\]

%%%%%%%%%%%%%%%%%%%%%%%%%%%%%%%%%%%%%%%%%%%
%  section 8
%%%%%%%%%%%%%%%%%%%%%%%%%%%%%%%%%%%%%%%%%%%
\section{The case $(p-1)e<a\le pe-2$}

Write $a+1=(p-1)e+k$, where $1\le k<e$. Then
\begin{equation}\label{8.1}
f_{a,q}(X)\equiv -X^{q^e-2}-\cdots-X^{q^k-2}\pmod{X^{q^e}-X}.
\end{equation}

\begin{lem}\label{L8.1}
The only nonnegative integer solution of the system
\begin{align}\label{8.2}
&\sum_{j=k}^ec_j(q^j-2)\equiv 0\pmod{q^e-1},\\
\label{8.3}
&\sum_{j=k}^ec_j=q^k-1
\end{align}
is $(c_k,\dots,c_e)=(1,0,\dots,0,q^k-2)$.
\end{lem}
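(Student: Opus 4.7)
The plan is to follow the same blueprint as the proofs of Lemmas~\ref{L6.5} and \ref{L7.1}: introduce the unique integer $m\ge 0$ defined by
\[
\sum_{j=k}^e c_j(q^j-2)=m(q^e-1),
\]
pin down $m$ using a congruence and a size bound, then read off the $c_j$'s from the resulting identity.

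First, I would take \eqref{8.2} modulo $q$ and use \eqref{8.3} to obtain $-2(q^k-1)\equiv -m\pmod q$, so $m\equiv -2\pmod q$. The crude estimate
\[
m(q^e-1)=\sum_{j=k}^e c_j(q^j-2)\le (q^k-1)(q^e-2)<(q^k-1)(q^e-1)
\]
yields $m\le q^k-2$. Writing $m=aq-2$, we have $1\le a\le q^{k-1}$.

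The crucial step is then to rewrite the equation as
\[
\sum_{j=k}^e c_j q^j=(aq-2)q^e+2q^k-aq.
\]
Since $c_j=0$ for $j<k$, the left side is divisible by $q^k$; and since $k<e$, the right side is $\equiv -aq\pmod{q^k}$. Thus $q^{k-1}\mid a$, which combined with $a\le q^{k-1}$ forces $a=q^{k-1}$ and hence $m=q^k-2$. The equation becomes
\[
\sum_{j=k}^e c_j q^j=(q^k-2)q^e+q^k.
\]
A routine comparison $c_eq^e\le(q^k-2)q^e+q^k$ gives $c_e\le q^k-2$; and the upper bound $\sum_{j<e}c_jq^j\le q^{e-1}\sum_{j<e}c_j$, together with $q\ge 3$, then pins $c_e=q^k-2$. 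The residual equation $\sum_{j<e}c_jq^j=q^k$ subject to $\sum_{j<e}c_j=1$ has the unique solution $c_k=1$, $c_j=0$ for $k<j<e$.

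I expect the main obstacle to be spotting the divisibility trick. The observation that $c_j=0$ for $j<k$ forces $q^k\mid\sum c_jq^j$, which in turn constrains $a$ via $q^{k-1}\mid a$, is the decisive idea; once $m=q^k-2$ is identified, the remainder is a clean base-$q$ reading of the type already used in Lemmas~\ref{L6.5} and \ref{L7.1}.
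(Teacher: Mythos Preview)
Your proof is correct. It differs from the paper's own argument in one interesting way worth noting.

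You follow the template of Lemmas~\ref{L6.5} and \ref{L7.1} literally: name the multiple $m$, constrain it modulo $q$, and then pin it down exactly before reading off the $c_j$. The extra ingredient you need (and correctly supply) is the observation that $\sum_{j=k}^e c_jq^j$ is divisible by $q^k$, which forces $q^{k-1}\mid a$ and hence $m=q^k-2$. The paper, by contrast, does \emph{not} identify the multiple at all in this lemma. Instead it rewrites \eqref{8.2} as
\[
\sum_{j=k}^e c_jq^{j-k}+2q^{e-k}-2\equiv 0\pmod{q^e-1},
\]
and then argues directly on $c_e$: it rules out $c_e=q^k-1$ by inspection and rules out $c_e\le q^k-3$ by showing the displayed quantity lies strictly between $0$ and $q^e-1$. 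Once $c_e=q^k-2$ is known, the remainder is the same as yours.

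Both arguments are short and elementary. Your divisibility trick is the cleaner way to stay faithful to the earlier lemmas, and it generalises the step ``$k\equiv -2\pmod q$ and $k<q$ force $k=q-2$'' from Lemma~\ref{L7.1} in the natural way. The paper's route trades that observation for a direct size estimate, avoiding the need to name the multiple; this is perhaps marginally quicker here but less parallel to the preceding proofs.
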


\begin{proof}
Let $(c_k,\dots,c_e)$ be a solution of \eqref{8.2} and \eqref{8.3}. Then
\[
0\equiv\sum_{j=k}^ec_jq^j-2(q^k-1)\equiv \sum_{j=k}^ec_jq^j-2(q^k-q^e)\pmod{q^e-1},
\]
and hence
\begin{equation}\label{8.4}
\sum_{j=k}^ec_jq^{j-k}+2q^{e-k}-2\equiv 0\pmod{q^e-1}.
\end{equation}
Clearly, $c_e\le q^k-2$. (Otherwise, $c_k=\cdots=c_{e-1}=0$ and $c_e=q^k-1$. Then \eqref{8.2} is not satisfied.) If $c_e\le q^k-3$, then
\begin{align*}
0\,&<\sum_{j=k}^ec_jq^{j-k}+2q^{e-k}-2\le (q^k-3)q^{e-k}+2q^{e-k-1}+2q^{e-k}-2\cr
&=q^e-q^{e-k}+2q^{e-k-1}-2<q^e-1,
\end{align*}
which is a contradiction to \eqref{8.4}. Hence we have proved that $c_e=q^k-2$. Now \eqref{8.4} becomes
\[
\sum_{j=k}^{e-1}c_jq^{j-k}\equiv 1\pmod{q^e-1}.
\]
Since $\sum_{j=1}^{e-1}c_j=1$, we must have $c_k=1$ and $c_{k+1}=\cdots=c_{e-1}=0$.
\end{proof}

By \eqref{8.1} and Lemma~\ref{L8.1}, we have
\[
C(p^k-1)=(-1)^{q^k-1}\binom{q^k-1}1\equiv_p -1\not\equiv_p 0.
\]

%%%%%%%%%%%%%%%%%%%%%%%%%%%%%%%%%%%%%%%%%%%

\end{document}